\renewcommand{\Box}{\framebox{\rule{0.3em}{0.0em}}}
\newtheorem{thm}{Theorem}[section]
\newtheorem{prop}{Proposition}[section]
\newtheorem{ex}{Example}[section]
\newtheorem{rem}{Remark}[section]
\newtheorem{defi}{Definition}[section]
\newtheorem{assu}{Assumption}[section]
\newtheorem{cor}{Corollary}[section]
\newcommand{\bgeqn}{\begin{eqnarray}}
\newcommand{\edeqn}{\end{eqnarray}}
\newcommand{\bgeq}{\begin{eqnarray*}}
\newcommand{\edeq}{\end{eqnarray*}}
\newcommand{\bgc}{\begin{center}}
\newcommand{\edc}{\end{center}}
\def\Re{{\rm I\!R}}
\def\cN{{\cal N }}
\def\cC{{\cal C }}
\def\cI{{\cal I }}
\def\cB{{\cal B}}
\def\cD{{\cal D}}
\def\bz{{\bar z}}
\renewcommand{\Box}{\hfill \rule{2.3mm}{2.3mm}}
\newenvironment{proof}{\noindent{\bf Proof. }}{\hfill $\Box$\medskip}
\title{Necessary Optimality Conditions for Optimal Control Problems with Equilibrium Constraints}
\author{Lei Guo\thanks{Lei Guo, Sino-US Global Logistics
Institute, Shanghai Jiao Tong University, Shanghai 200030, China. E-mail: {\tt
guolayne@sjtu.edu.cn}. This author's work was supported in part by NSFC (Grant No. 11401379).} \ and \ Jane J.
Ye\thanks{Jane J. Ye, Department of Mathematics and
Statistics, University of Victoria, Victoria, BC, V8W 2Y2 Canada. E-mail:
{\tt janeye@uvic.ca}. This author's work was supported in part by  NSERC.}}
\date{March 2015, revised December 2015, April 2016}
\begin{document}
\maketitle

\baselineskip 18pt

\vspace{4pt}{\bf Abstract.} \ This paper introduces and studies the optimal control problem with equilibrium constraints (OCPEC). The OCPEC is an optimal control problem with a mixed state and control equilibrium constraint formulated as a complementarity constraint and it can be seen as a dynamic mathematical program with equilibrium constraints. It provides a powerful modeling paradigm for many practical problems such as  bilevel optimal control problems and  dynamic principal-agent problems. In this paper, we propose  weak, Clarke, Mordukhovich and strong  stationarities for the OCPEC.  Moreover, we give some sufficient conditions to ensure that the  local minimizers of the OCPEC are Fritz John type weakly stationary, Mordukhovich stationary and strongly stationary, respectively. Unlike Pontryagain's maximum principle for the classical optimal control problem with equality and inequality constraints, a counter example shows that for general OCPECs, there may exist  two sets of multipliers for the complementarity constraints. A condition under which these two sets of multipliers coincide is given.



\vspace{4pt} \noindent\vspace{4pt}{\bf Key Words.}  \ Optimal control problem with equilibrium constraints,
necessary optimality condition, weak stationarity, Clarke stationarity, Mordukhovich stationarity, strong stationarity.

\vspace{4pt}\noindent\vspace{4pt}{\bf 2010 Mathematics Subject Classification.} \  49K15, 49K21, 90C33.

\medskip

\baselineskip 16pt
\parskip 2pt
\newpage

\section{Introduction}

We are given a time interval $[t_0,t_1]\subseteq\Re$, a multifunction  $U$ mapping $[t_0,t_1]$ to nonempty subsets of $\Re^m$, and a dynamic function
$\phi: [t_0,t_1]\times \Re^n\times \Re^m\to \Re^n.$
A control or control function $u(\cdot)$ is a measurable function on $[t_0,t_1]$ such that $u(t)\in U(t)$ for almost every $t\in [t_0,t_1]$. The state or state trajectory, corresponding to a given control $u(\cdot)$, refers to a solution $x(\cdot)$ of the following controlled differential equation:
\begin{eqnarray}
&& \dot{x}(t)=\phi(t,x(t),u(t))  \mbox{ almost everywhere  (a.e.)}  \ \ t\in [t_0,t_1],\label{state equation}\\
&& (x(t_0),x(t_1))\in E,\label{boundary condition}
\end{eqnarray}
where $E$ is a given closed subset in
$\Re^n\times \Re^n$ and $\dot{x}(t)$ is the first-order derivative of the state $x(\cdot)$ at time $t$. The differential equation \eqref{state equation} linking the state $x(\cdot)$ and the control $u(\cdot)$ is referred to as the state equation.
In optimal control, one looks for a state and control pair $(x(\cdot),u(\cdot))$ satisfying the state equation \eqref{state equation} and the boundary condition \eqref{boundary condition} so as to minimize an objective function $J(x(\cdot),u(\cdot))$. In practice,  there are generally extra constraints to be satisfied by the state and control pair.  Such  constraints are called mixed state and control constraints (mixed constraints for short).

Pang and Stewart \cite{Pang08} recently  introduced a class of controlled differential variational inequality (DVI) problem as follows:
\begin{eqnarray*}
\begin{array}{l}
\dot{x}(t)=\phi(t,x(t),u(t)) \quad{\rm a.e.} \ \ t\in [t_0,t_1],\\[2pt]
 (x(t_0),x(t_1))\in E,\\[2pt]
 u(t) \in K,\ \langle u'-u(t), \Upsilon(t,x(t),u(t))\rangle \geq 0\quad  \forall u'\in K \quad{\rm a.e.} \ \ t\in [t_0,t_1],\\[2pt]
\end{array}
\end{eqnarray*}
where $\Upsilon:[t_0,t_1]\times \Re^n\times \Re^m \to \Re^m$ is a vector-valued function and $K$ is a closed convex subset in $\Re^m$. The DVI provides a powerful modeling paradigm for many practical problems such as differential Nash equilibrium games (\cite{Pang08,Chen14}),  multi-rigid-body dynamics with frictional contacts (\cite{Stewart}), and hybrid engineering systems (\cite{Heemels}). In the case where $K=\Re^m_+$, the DVI becomes the controlled differential complementarity problem (DCP)
\begin{eqnarray}\label{dcp1}
&& \dot{x}(t)=\phi(t,x(t),u(t))  \quad{\rm a.e.} \ \ t\in [t_0,t_1],\nonumber\\
&&  (x(t_0),x(t_1))\in E,\\
&& 0\leq u(t) \perp \Upsilon(t,x(t), u(t)) \geq 0  \quad{\rm a.e.} \ \ t\in [t_0,t_1],\nonumber
\end{eqnarray}
where  $a\perp b$ means that  vector $a$ is perpendicular to vector $b$.
In the case where $K$ can be expressed as a set of solutions satisfying some inequality constraints such as $$K=\{u\in \Re^m: g(u) \leq 0\}$$ where $g(\cdot)$ is a convex vector-valued function,  when $g(\cdot)$ is affine or Slater's condition holds, the DVI can also be represented as the following DCP:
\begin{eqnarray}\label{dcp2}
\begin{array}{ll}
\dot{x}(t)=\phi(t,x(t),u(t)) &{\rm a.e.} \ \ t\in [t_0,t_1],\\[2pt]
 (x(t_0),x(t_1))\in E,&\\[2pt]
-\Upsilon(t,x(t),u(t))+\nabla g(u(t)) \lambda(t)=0 &{\rm a.e.} \ \ t\in [t_0,t_1],\\[2pt]
0\leq \lambda(t) \perp -g( u(t)) \geq 0 &{\rm a.e.} \ \ t\in [t_0,t_1] ,
\end{array}
\end{eqnarray}
where $\nabla g$ denotes the transposed Jacobian of $g$ and  $\lambda(t)$ is a Lagrange multiplier corresponding to the inequality constraint $g(u(t))\leq0$.

Motivated by the studies for the DVI, we consider a class of controlled differential complementarity system where in addition to the state equation (\ref{state equation}) and the boundary condition (\ref{boundary condition}), the state and control pair $(x(\cdot),u(\cdot))$ satisfies some mixed equality and inequality constraints, as well as a mixed equilibrium system
formulated as a complementarity system:
\begin{eqnarray}\label{cpeqn}
0\leq G(t,x(t), u(t))\perp H(t,x(t), u(t))\geq0 & {\rm a.e.} \ \ t\in [t_0,t_1],
\end{eqnarray}
where $G,H:[t_0,t_1]\times \Re^{n}\times \Re^m\to \Re^l$. We say that an index $i$ is degenerate if
$
G_i(t,x(t), u(t))=H_i(t,x(t), u(t))=0.
$
It is obvious that such a system includes DCPs (\ref{dcp1}) and (\ref{dcp2}) as  special cases. Correspondingly, it is natural to determine what is the ``best" control (or the ``best''  state and control pair) satisfying such a system to achieve some given objective. A simple example is to find the best control from such a system so that   the final state $x(t_1)$ will reach some prescribed target from a given initial state $x(t_0)$. In this paper, we introduce a class of optimal control problems with equilibrium constraints (OCPEC) in which one looks for a state and control pair $(x(\cdot),u(\cdot))$ from such a system so as to minimize an objective function $J(x(\cdot),u(\cdot))$. Mathematically, the OCPEC considered in this paper is of the form
\begin{eqnarray}
{\rm (OCPEC)}~~~~\min &&  J(x(\cdot),u(\cdot)) \nonumber \\
{\rm s.t.} && \dot{x}(t)=\phi(t,x(t), u(t)) \,\quad{\rm a.e.} \  \,t\in [t_0,t_1],\nonumber\\
&& g(t,x(t), u(t))\leq0,\ h(t,x(t), u(t))=0\quad{\rm a.e.} \  \,t\in [t_0,t_1],\label{cons}\\
&& 0\leq G(t,x(t), u(t))\perp H(t,x(t), u(t))\geq0\quad{\rm a.e.} \  \,t\in [t_0,t_1],\nonumber\\
&& u(t) \in U(t) \quad{\rm a.e.} \  \,t\in [t_0,t_1],\label{control}\\
&& (x(t_0), x(t_1))\in E,\nonumber
\end{eqnarray}
where
$g:[t_0,t_1]\times \Re^{n}\times \Re^m\to \Re^{l_1}$ and $h:[t_0,t_1]\times \Re^{n}\times \Re^m\to \Re^{l_2}$.

The OCPEC can be considered as a dynamic version of the mathematical program with equilibrium constraints (MPEC) that has been an active area of research in recent years (see, e.g., the monographs \cite{Luo-book,Outrata-book}). The OCPEC provides a powerful modeling paradigm for many practical problems such as the dynamic optimization of chemical processes with changes in the number of equilibrium phases  \cite{Raghunathan04}. A large part of source problems of the OCPEC comes from  bilevel optimal control problems (see, e.g., \cite{Benita-Mehlitz,Hatz,Hatz12,Ye95,Ye97}),  Stackelberg differential games (see, e.g., \cite{Xie97,He07}), and dynamic principal-agent problems (see, e.g., \cite{continuousPA08,DPA10}) when there exist inequality constraints in the lower-level problem. For those problems, if the lower-level problem, which is a parametric optimal control problem, is replaced by Pontryagain's maximum principle (see \cite{Pontray,Vinter}) which is the well-known first-order necessary optimality condition for optimal control problems, then an OCPEC results; see, e.g., \cite[Section 6.1]{Hatz}.

It is desirable to know whether there exists an optimal control before solving the OCPEC. The Filippov's existence theorem  for Mayer's problem that is due to Filippov \cite{Filippov} (see also \cite[Theorem 9.2.i]{Cesari}) requires the convexity of the velocity set
$\phi(t,x,{\cal U}(t,x))$
where $${\cal U}(t,x):=\{u\in U(t): g(t,x,u)\leq 0, h(t,x,u)=0, 0\leq G(t,x, u)\perp H(t,x,u)\geq 0\}. $$
The velocity set is in general nonconvex due to the existence of the complementarity constraints. Thus, the classical existence theorem may not be applicable and one may need to look for new ways to prove the existence of optimal controls for the OCPEC or use the existence theorem in a relaxed control setting (\cite{Young,Warga}). We leave these challenging questions for future research.

In this paper, we assume that an optimal control exists for the OCPEC and focus on deriving its necessary optimality conditions. To the best of our knowledge, there is no such result in the literature so far. Although deriving  necessary optimality conditions for optimal control problems with mixed constraints is a highly challenging problem, some progresses have been made; see, e.g., \cite{Hestenes,Hestenes65,dub,pinho5,pinho6,C05,C-P,Li-Ye}. Unfortunately, none of these results are applicable to the OCPEC and its reformulations.
The constraint (\ref{cpeqn}) is obviously equivalent to that for almost every $t\in[t_0,t_1]$,
\begin{eqnarray}\label{cpcp}
G(t,x(t),u(t))\geq0,\ H(t,x(t),u(t))\geq0,\ G(t,x(t), u(t))^\top H(t,x(t), u(t))\leq0,
\end{eqnarray}
where $^\top$ denotes the transpose, which is clearly a system of inequalities.
However, all the inequalities in (\ref{cpcp}) never hold strictly at the same time. This means that the Mangasarian-Fromovitz constraint qualification (MFCQ) is violated at any point satisfying (\ref{cpcp}). The classical necessary optimality conditions for optimal control problems with mixed equality and inequality constraints generally require the linear independence constraint qualification  (LICQ) (see, e.g., \cite{Hestenes65}) or the Mangasarian-Fromovitz condition (MFC) (see, e.g., \cite{C-P}) over some neighborhood of the local minimizer. But both LICQ and MFC are  stronger than MFCQ. Thus, the classical necessary optimality conditions for optimal control problems with equality and inequality constraints cannot be applied to the OCPEC with the complementarity constraint (\ref{cpeqn})   reformulated as inequality constraints (\ref{cpcp}). In the MPEC literature, by using the so-called ``piecewise programming'' approach (see, e.g., \cite{Luo-book,Ye99}), the feasible region of an MPEC is locally reformulated as a union of finitely many pieces where each piece is expressed as a system of equality and inequality constraints, and then it can be shown that the strong (S-) stationarity holds under the so-called MPEC LICQ. It is obvious that such an approach fails for the dynamic complementarity system (\ref{cpeqn}). A well-known technique to derive a necessary optimality condition for an MPEC called the Clarke (C-) stationarity is to use the equivalent nonsmooth reformulation $\min\{G, H\}=0$ ($``\min"$ denotes the componentwise minimum) to replace the complementarity system $0\leq G\perp H\geq 0$ (see, e.g., \cite{Scholtes-stationarity,Jane-jmaa2005}).  This technique, however, is also not applicable to the OCPEC since such an approach leads to an optimal control problem with a  {\it nonsmooth} mixed equality constraint for which there does not exist any applicable necessary optimality conditions in the control literature.
Another equivalent reformulation of the complementarity constraint is
$\big(G, H\big)\in {\cal C}^l$ where
\begin{equation}\label{defi C}
\cC^l:=\{(a,b)\in \Re^l\times \Re^l: 0\leq a\ \bot\ b\geq0\}
\end{equation}
is called the complementarity cone.
It is known that this reformulation is useful to obtain  a necessary optimality condition in the form of Mordukhovich (M-) stationarity in the MPEC literature; see, e.g., \cite{Jane-jmaa2005}.
Using this reformulation, the OCPEC can be equivalently reformulated as
\begin{eqnarray}\label{general model}
(P_s)~~~~~~ \min &&J(x(\cdot),u(\cdot)) \nonumber\\
{\rm s.t.} &&  \dot{x}(t)=\phi(t,x(t), u(t)) \,\quad{\rm a.e.} \ \ t\in [t_0,t_1],\\
&&  (x(t),u(t))\in S(t)\,\quad{\rm a.e.} \ \ t\in [t_0,t_1], \nonumber \\
&& (x(t_0), x(t_1))\in E, \nonumber
\end{eqnarray}
with
\begin{eqnarray}\label{cpc1}
S(t):=\left\{(x,u)\in\Re^n\times U(t): \begin{array}{l}g(t,x,u)\leq0, h(t,x,u)=0\\[4pt]\big(G(t,x,u), H(t,x,u)\big)\in {\cal C}^l\end{array}\right\}.
\end{eqnarray}
An optimal control problem in the form of $(P_s)$ with an abstract mixed constraint $S(t)$ was recently studied by  Clarke and De Pinho \cite{C-P}. In this paper, we first derive a slightly sharper necessary optimality condition for $(P_s)$ than \cite[Theorem 2.1]{C-P} and then apply it to the problem with $S(t)$ defined as in (\ref{cpc1}). We hope that we would get the M-stationarity as in the MPEC literature. Unfortunately, for the OCPEC, no sign information on the multipliers associated with the degenerate indices can be derived and, consequently, we can only obtain a weak stationarity. In order to get more sign information on the multipliers associated with the degenerate indices, we further utilize the Weierstrass condition to obtain the second set of multipliers. A counter example shows that in general these two sets of multipliers may be different in measure. However, under the  MPEC LICQ, since the multipliers corresponding to the weak stationarity are unique, these two sets of multipliers coincide almost everywhere and then we can obtain the S-stationarity with one set of multipliers.

The rest of this paper is organized as follows. In Section 2, we give some preliminaries and preliminary results. In Section 3,  we develop the necessary optimality conditions for the OCPEC. Section 4 illustrates our derived results with a simple example.

\section{Preliminary  and preliminary results}\label{section2}

Throughout this paper, $\|\cdot\|$ denotes the Euclidean norm and ${\cal B}_\delta(x):=\{y: \|y-x\|<\delta\}$ the open ball centered at $x$ with positive radius $\delta$. The boundary, closure, convex hull, and  closed convex hull of a subset $\Omega\subseteq\Re^n$ are denoted by ${\rm bd}\,\Omega$, ${\rm cl}\, \Omega$,  ${\rm co}\,\Omega$, and ${\rm clco}\,\Omega$, respectively. Moreover, ${\rm dist}_\Omega (x)$ denotes the Euclidean distance from
$x$ to $\Omega$. For any $a,b\in\Re^n$, $a_+:=\max\{a,0\}$ denotes the nonnegative part of vector $a$ and  $\langle a, b\rangle$  the inner product of vector $a$ and  vector $b$. Given a mapping $\psi:\Re^n \to \Re^m$ and a point $x\in \Re^n$,
$\nabla \psi(x)$ stands for the transposed Jacobian of $\psi(\cdot)$ at $x$ and $I_\psi(x):=\{i: \psi_i(x)=0\}$ the active index set of $\psi(\cdot)$ at $x$. The Minkowski sum of a
singleton $\{a\}$ and an arbitrary set $A$ is denoted by $a$ + $A$.

\subsection{Background in variational analysis}
In this subsection, we review some basic concepts and results in variational
analysis that will be used later on; see, e.g., \cite{c,m1,RW} for more details.
Given a subset $\Omega\subseteq\Re^n$ and $x\in {\rm cl\,}\Omega$, the proximal normal cone to $\Omega$ at $x$ is defined as
\[
\cN^P_{\Omega}(x):=\{v\in \Re^n: \exists \ \sigma\geq 0 \mbox{ s.t. }   \langle v, y-x\rangle  \leq \sigma\|y-x\|^2\quad \forall y\in\Omega\} ,
\]
the limiting normal cone to $\Omega$ at $x$ is defined as
\[
\cN^L_{\Omega}(x):=\{v\in \Re^n: \exists (x^k, v^k) \to (x, v)\ {\rm with}\ v^k\in \cN^P_{\Omega}(x^k)\quad \forall k\},
\]
and the Clarke normal cone to $\Omega$ at $x$ is defined as
$
\cN^C_{\Omega}(x):={\rm clco}\,\cN^L_{\Omega}(x),
$
which also holds true even if the space is not finite dimensional but a more general Asplund space \cite{m1}. We can easily obtain the following inclusions:
\[
\cN^P_{\Omega}(x)\subseteq \cN^L_{\Omega}(x)\subseteq\cN^C_{\Omega}(x)\quad \forall x\in {\rm cl\,}\Omega.
\]
For a multifunction $\Xi:\Re^n\rightrightarrows
\Re^m$, its graph and domain are defined, respectively, as
\[
{\rm gph\, \Xi}:=\{(x,u): u\in \Xi(x)\}\quad {\rm and}\quad {\rm dom}\,\Xi:=\{x: \Xi(x)\neq \emptyset\}.
\]
Both the limiting normal cone mapping $\cN_\Omega^L(\cdot)$ and Clarke normal cone mapping $\cN_\Omega^C(\cdot)$ are closed in the sense that their graphs are closed.

The following expression for the limiting normal cone of the complementarity cone $\cC^l$ is well-known (see, e.g., \cite[Proposition 3.7]{jane00}) and will be used in  Section 3.

\begin{prop} \label{normalcone}
For any $(a,b)\in\cC^l$ where $\cC^l$ is defined in \eqref{defi C},
\begin{equation*}
\cN^L_{\cC^l}(a,b)=\left\{(\alpha,\beta)\in \Re^l\times \Re^l:\begin{array}{l}\alpha_i=0\quad {\rm if}\ a_i>0,\quad \beta_i=0\quad {\rm if}\ b_i>0\\[2pt] \alpha_i<0,\beta_i<0 {\ \rm or \ } \alpha_i\beta_i=0 \quad {\rm if}\ a_i=b_i=0\end{array}\right\}.
\end{equation*}
\end{prop}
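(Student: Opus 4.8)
The plan is to reduce the claim to the scalar case $l=1$ and then settle that case by a short analysis over the three possible types of points of $\cC^1$. I would observe first that $\cC^l$ is closed and that $(a,b)\in\cC^l$ holds if and only if $(a_i,b_i)\in\cC^1$ for every $i$, where $\cC^1=\{(s,t)\in\Re^2:\ s\ge 0,\ t\ge 0,\ st=0\}$; this uses only that each $a_ib_i$ is nonnegative, so that their sum vanishes iff each term vanishes. Hence, after the permutation of coordinates that groups the $i$-th pair $(a_i,b_i)$ together, $\cC^l$ is exactly the Cartesian product of $l$ copies of $\cC^1$. Since the proximal normal cone, and therefore the limiting normal cone, of a Cartesian product of finitely many closed sets is the product of the corresponding normal cones \cite{RW}, and since a coordinate permutation is an orthogonal map under which normal cones transform in the obvious way, it suffices to prove that for $(s,t)\in\cC^1$ one has $\alpha=0$ whenever $s>0$, $\beta=0$ whenever $t>0$, and $\cN^L_{\cC^1}(0,0)=\{(\alpha,\beta):\alpha<0,\beta<0\}\cup\{(\alpha,\beta):\alpha\beta=0\}$.

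Suppose $s>0$; then necessarily $t=0$, and near $(s,0)$ the set $\cC^1$ is a one-dimensional $C^\infty$ manifold with tangent space $\Re\times\{0\}$, so $\cN^L_{\cC^1}(s,0)=\{0\}\times\Re$, i.e.\ $\alpha=0$ with $\beta$ free; the case $t>0$ is symmetric. For the corner $(s,t)=(0,0)$ I would first compute the proximal normal cone directly from the definition: $(\alpha,\beta)\in\cN^P_{\cC^1}(0,0)$ means that $\alpha s'+\beta t'\le\sigma((s')^2+(t')^2)$ for all $(s',t')\in\cC^1$ and some $\sigma\ge 0$; testing along $(s',0)$ and along $(0,t')$ with $s',t'\downarrow 0$ forces $\alpha\le 0$ and $\beta\le 0$, while conversely any $(\alpha,\beta)$ with $\alpha\le 0$, $\beta\le 0$ works with $\sigma=0$ because $s',t'\ge 0$ on $\cC^1$. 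Thus $\cN^P_{\cC^1}(0,0)=\{(\alpha,\beta):\alpha\le 0,\ \beta\le 0\}$.

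To conclude I would compute $\cN^L_{\cC^1}(0,0)$ as the set of limits of proximal normals $v^k\in\cN^P_{\cC^1}(x^k)$ along $x^k\in\cC^1$ with $x^k\to(0,0)$. Up to subsequences, $x^k$ is either the origin (contributing $\{(\alpha,\beta):\alpha\le 0,\beta\le 0\}$), a sequence on the positive $s$-axis (contributing, by the previous paragraph, all of $\Re\times\{0\}$), or a sequence on the positive $t$-axis (contributing $\{0\}\times\Re$), and any other sequence yields limits already contained in the union of these three sets. Therefore $\cN^L_{\cC^1}(0,0)=\{(\alpha,\beta):\alpha\le 0,\beta\le 0\}\cup(\Re\times\{0\})\cup(\{0\}\times\Re)$, and a direct inspection shows that this equals $\{(\alpha,\beta):\alpha<0,\beta<0\}\cup\{(\alpha,\beta):\alpha\beta=0\}$, completing the proof.

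I do not expect a genuine obstacle here. The one point requiring mild care is the corner case, specifically the assertion that the limiting normal cone at the origin is generated by exactly those three families of proximal normals — for this one uses that near the origin the only points of $\cC^1$ are the origin itself and the two positive half-axes, together with the closedness of the relevant normal-cone graphs — and, on the reduction side, the standard Cartesian-product formula for normal cones quoted above.
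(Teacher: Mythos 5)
Your proof is correct and self-contained, whereas the paper does not actually prove Proposition 2.1: it quotes the formula as well known, citing \cite[Proposition 3.7]{jane00}. Your route --- observing that, after a coordinate permutation, $\cC^l$ is the $l$-fold Cartesian product of $\cC^1$, invoking the product rule for limiting normal cones from \cite{RW}, and then treating $\cC^1$ directly (smooth one-dimensional manifold off the corner, the explicit proximal computation $\cN^P_{\cC^1}(0,0)=\Re_-\times\Re_-$ at the corner, and finally the limiting construction over sequences on the two half-axes and at the origin) --- is precisely the standard elementary derivation underlying the cited result, and each step is sound; what it buys is a proof that needs nothing beyond the definitions and the product formula, at the cost of a page of routine case analysis that the paper outsources to the reference. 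One cosmetic slip: in your last paragraph the contributions of the two half-axes are interchanged --- proximal normals at points $(s,0)$ with $s>0$ lie in $\{0\}\times\Re$ (consistent with your own preceding computation and with the assertion $a_i>0\Rightarrow\alpha_i=0$ in the statement), while points $(0,t)$ with $t>0$ contribute $\Re\times\{0\}$. Since only the union of the three contributions is used, the conclusion $\cN^L_{\cC^1}(0,0)=(\Re_-\times\Re_-)\cup(\Re\times\{0\})\cup(\{0\}\times\Re)=\{(\alpha,\beta):\alpha<0,\beta<0\}\cup\{(\alpha,\beta):\alpha\beta=0\}$ is unaffected, but the labels should be swapped.
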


Given a lower semicontinuous function $\varphi:\Re^n\to \Re\cup\{+\infty\}$ and a point $x$ with $\varphi(x)$ finite, the limiting subdifferential of $\varphi$ at $x$ is defined as
\[
\partial^L \varphi(x):=\left\{v\in \Re^n: \exists (x^k,  v^k ) \to (x, v)\ {\rm with}\ \lim_{y\to x^k}\frac{f(y)-f(x^k)-\langle v^k, y-x^k\rangle}{\|y-x^k\|}\geq0\ \forall k\right\}.
\]
If $\varphi(\cdot)$ is Lipschitz continuous near $x$, then the Clarke subdifferential of $\varphi(\cdot)$ at $x$ can be defined as
$
\partial^C\varphi(x):={\rm clco}\,\partial^L \varphi(x),
$
which also holds true even if the space is not finite dimensional but a more general Asplund space \cite{m1}.
Both the limiting subdifferential mapping $\partial^L \varphi(\cdot)$ and Clarke subdifferential mapping $\partial^C \varphi(\cdot)$  are closed in the sense that their graphs are closed.

Given a point $(x,u)\in {\rm cl\,gph }\Xi$ for a multifunction $\Xi:\Re^n\rightrightarrows
\Re^m$, the coderivative $D^*\Xi(x,u):\Re^m\rightrightarrows\Re^n$ of $\Xi(\cdot)$ at $(x,u)$ is defined as
\[
D^*\Xi(x,u)(y):=\{v\in \Re^n: (v,-y)\in \cN^L_{\rm gph \Xi}(x,u)\}.
\]
The symbol $D^*\Xi (x)$ is used when $\Xi(\cdot)$ is single-valued at $x$ and $u=\Xi(x)$. Moreover, if $\Xi(\cdot)$ is single-valued and Lipschitz continuous near $x$, then, by \cite[Theorem 1.90]{m1},
\[
D^*\Xi (x)(y) = \partial^L \langle y, \Xi(x)\rangle\quad \forall y\in \Re^m.
\]


\subsection{Local error bound condition and constraint qualifications}

In this subsection, we consider the following constrained system:
\begin{equation}
\Omega:=\{z\in {\cal D}:   g(z) \leq 0,\ h(z) =0,\ (G(z),H(z))\in \cC^l\},
\label{constraineds}
\end{equation}
where ${\cal D}$ is a closed subset in $\Re^d$, and $g:\Re^d\rightarrow \Re^{l_1}, h:\Re^d\rightarrow \Re^{l_2}, G,H: \Re^d\rightarrow \Re^l$ are all strictly differentiable.  We say that the local error bound condition holds (for the constrained system representing the set $\Omega$ as in \eqref{constraineds}) at $\bar{z}\in \Omega$  if there exist $\tau>0$ and $\delta>0$ such that
\begin{eqnarray*}
{\rm dist}_{\Omega}(z) \leq \tau \big( \|g(z)_+\|+\|h(z)\|+{\rm dist}_{\cC^l}(G(z),H(z))\big)\quad \forall  z\in \cB_\delta(\bar{z})\cap {\cal D}.
\end{eqnarray*}
It is well-known that the local error bound condition at $\bar z$ is equivalent to the calmness of the perturbed constrained system
\begin{equation}\label{pconstraineds}
\Omega(y^g,y^h,y^G,y^H):=\{z\in \cD: g(z)+y^g \leq 0,\ h(z)+y^h =0,\ 0\leq G(z)+y^G\perp H(z)+y^H \geq 0\}
\end{equation}
at $(0,0,0,0, \bar{z})$ (see, e.g., \cite{henrion-outrata}).
The local error bound condition is very weak  and there exist many sufficient conditions for it to hold; see, e.g., \cite{wu-ye02,Wu-Yemp, Wu-Ye03,henrion-outrata,guoyezhang13,Jane-Jin}. The following constraint qualifications are such sufficient conditions.

\begin{defi}[MPEC constraint qualifications]\label{defi}\rm
Let $\bar{z}\in\Omega$ where $\Omega$ is defined in \eqref{constraineds}. When $\cD=\Re^d$, we say that the MPEC LICQ holds at $\bz$ if the family of gradients
\[
\{\nabla g_i(\bar{z}):i\in{I}_g(\bar{z})\}\cup \{\nabla h_i(\bar{z}):i=1,\ldots,l_2\}\cup\{\nabla G_i(\bz):i\in I_G(\bz)\}\cup \{\nabla H_i(\bz): i\in I_H(\bz)\}
\]
is linearly independent.

We say that the MPEC linear condition holds if all the functions $g(\cdot),h(\cdot),G(\cdot),H(\cdot)$ are affine and $\cD$ is a union of finitely many polyhedral sets.

We say that the MPEC quasi-normality holds at $\bar{z}$ if there is no nonzero vector $(\lambda,\upsilon,\mu,\nu)$ such that
\begin{itemize}
\item  $0\in\nabla g(\bar{z})\lambda+\nabla h(\bar{z})\upsilon-\nabla G(\bz)\mu-\nabla H(\bz)\nu+\cN^L_\cD(\bz)$,
\item  $\lambda\geq0$, $\lambda_i=0\ \forall i\notin I_g(\bz)$, $\mu_i=0\ \forall i\notin I_G(\bz),\ \nu_i=0\ \forall i\notin I_H(\bz),\ \mu_i>0, \nu_i>0 \mbox{\ \rm or\ } \mu_i\nu_i=0\ \forall i\in I_G(\bz)\cap I_H(\bz)$,
\item
there exists a sequence $\{z^k\}\subseteq \cD$ converging to $\bz$ such that for
each $k$,
\begin{eqnarray*}
&& \lambda_i>0 \ \Longrightarrow \ g_i(z^k)>0,\quad \upsilon_i\neq0 \ \Longrightarrow \ \upsilon_ih_i(z^k)>0,\\
&&\mu_i\neq0  \ \Longrightarrow \ \mu_iG_i(z^k)<0, \quad \nu_i\neq0  \ \Longrightarrow \  \nu_iH_i(z^k)<0.
\end{eqnarray*}
\end{itemize}
\end{defi}

It should be noted that the MPEC quasi-normality is a weak condition which holds automatically when the MPEC linear condition holds with $\cD=\Re^d$ and is also implied by the MPEC LICQ.

\begin{prop}\label{error bound}
The local error bound condition holds at $\bz\in \Omega$ if the MPEC linear condition or the MPEC quasi-normality holds at $\bz$.
\end{prop}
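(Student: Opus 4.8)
The plan is to establish the two claimed implications separately by reducing each to a known sufficient condition for the local error bound (equivalently, for calmness of the perturbed system \eqref{pconstraineds}), and in both cases the key is to absorb the complementarity cone constraint $(G(z),H(z))\in\cC^l$ into a smooth equality/inequality description on finitely many ``pieces'' near $\bz$. I would first record the piecewise decomposition: near $\bz$, for each index $i$ exactly one of the cases $G_i(\bz)>0$, $H_i(\bz)>0$, or $G_i(\bz)=H_i(\bz)=0$ occurs, and writing $\beta:=I_G(\bz)\cap I_H(\bz)$ for the degenerate (biactive) index set, the set $\Omega$ coincides locally with the finite union $\bigcup_{(\beta_1,\beta_2)}\Omega_{\beta_1,\beta_2}$ over partitions $\beta=\beta_1\cup\beta_2$, where on each piece $\Omega_{\beta_1,\beta_2}$ the complementarity constraint becomes the smooth system $G_i(z)=0$ for $i\in\beta_1\cup I_G(\bz)\setminus\beta$ together with $H_i(z)=0$ for $i\in\beta_2\cup I_H(\bz)\setminus\beta$ plus the inequalities $G_i(z)\ge0$, $H_i(z)\ge0$ on the appropriate indices. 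Since a finite union of sets each satisfying a local error bound at $\bz$ again satisfies a local error bound at $\bz$ (the distance to the union is bounded by the minimum of the distances, and one compares residuals), it suffices to verify the error bound for each piece, which is now an ordinary nonlinear system $\{z\in\cD: \tilde g(z)\le0,\ \tilde h(z)=0\}$ with smooth data.

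For the MPEC linear condition case this is immediate: on each piece the functions $\tilde g,\tilde h$ are affine (restrictions/sign-flips of the affine $g,h,G,H$) and $\cD$ is a union of finitely many polyhedral sets, so each $\Omega_{\beta_1,\beta_2}$ is itself a union of finitely many polyhedra, and Hoffman's error bound (the polyhedral case of Robinson's result) gives a global, hence local, error bound at $\bz$ for each piece; taking the maximum of the finitely many constants and the minimum of the finitely many radii finishes that implication.

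For the MPEC quasi-normality case I would argue that quasi-normality of the original MPEC system at $\bz$ forces ordinary quasi-normality (in the sense of Bertsekas--Ozdaglar / Hestenes, relative to the abstract set $\cD$) of each relevant piece $\Omega_{\beta_1,\beta_2}$ at $\bz$, and then invoke the known fact that quasi-normality implies the local error bound (equivalently calmness) for a smooth constraint system; see the references \cite{henrion-outrata,guoyezhang13,Jane-Jin} already cited. Concretely, a nonzero abnormal multiplier $(\lambda,\upsilon,\tilde\mu,\tilde\nu)$ witnessing failure of quasi-normality for a piece — with $\tilde\mu$ supported on $\beta_1\cup(I_G(\bz)\setminus\beta)$ (unrestricted in sign off the degenerate part, but with the sign pattern on $\beta_1$ inherited from the piece) and likewise $\tilde\nu$ — can be reassembled, by zero-padding over the indices not appearing on that piece and splitting the degenerate block according to $(\beta_1,\beta_2)$, into a nonzero vector $(\lambda,\upsilon,\mu,\nu)$ satisfying all three bullets of the MPEC quasi-normality definition: the stationarity inclusion $0\in\nabla g(\bz)\lambda+\nabla h(\bz)\upsilon-\nabla G(\bz)\mu-\nabla H(\bz)\nu+\cN^L_\cD(\bz)$ holds because the piece's gradient system is a sub-collection, the complementarity sign condition $\mu_i>0,\nu_i>0$ or $\mu_i\nu_i=0$ on $\beta$ holds because on each degenerate index at most one of $\mu_i,\nu_i$ is nonzero by construction, and the sequential condition transfers verbatim since $z^k\to\bz$ can be taken inside the piece. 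This contradicts the assumed MPEC quasi-normality, so each piece is quasi-normal and hence admits the local error bound; combining the finitely many pieces as before yields the error bound for $\Omega$ at $\bz$.

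The main obstacle I anticipate is the bookkeeping in the quasi-normality transfer: one must be careful that the sign restrictions in the ordinary quasi-normality condition for a piece (inequality multipliers nonnegative, equality multipliers free) line up correctly with the mixed sign/zero pattern demanded on the degenerate block in Definition \ref{defi}, and that the ``or $\mu_i\nu_i=0$'' alternative is exactly what lets a one-sided active inequality on a piece be promoted to a legitimate MPEC multiplier. A secondary technical point is confirming that the local error bound is stable under the finite-union reduction in the presence of the abstract closed set $\cD$; this is standard but should be stated, since the distance comparison and the residual comparison both need the pieces to exhaust a full neighborhood of $\bz$ in $\Omega\cap\cD$, which is guaranteed by the piecewise decomposition above.
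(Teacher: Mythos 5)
Your treatment of the MPEC linear case is essentially sound and, after the piecewise splitting, amounts to the paper's own one-line argument: locally the feasible set (equivalently, the graph of the perturbed system \eqref{pconstraineds}) is a finite union of polyhedra, and the error bound is Robinson's upper-Lipschitz property of polyhedral multifunctions. Note only that with the abstract set $\cD$ (a union of polyhedra) present you need Robinson's result rather than plain Hoffman, since the residual carries no term measuring violation of $z\in\cD$, and that the residual comparison $\min_j r_j(z)\le C\,r(z)$ between piece residuals and the complementarity residual does need to be recorded.

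The quasi-normality half, however, has a genuine gap: the transfer claim ``MPEC quasi-normality at $\bz$ forces ordinary quasi-normality of each piece at $\bz$'' is false, and the bookkeeping step you lean on --- ``on each degenerate index at most one of $\mu_i,\nu_i$ is nonzero by construction'' --- is exactly where it breaks. On a piece $\Omega_{\beta_1,\beta_2}$, a degenerate index $i\in\beta_1$ contributes \emph{both} an active equality $G_i=0$ and an active inequality $H_i\ge 0$, so an abnormal piece multiplier may carry a sign-free equality multiplier $a_i$ and a nonnegative inequality multiplier $c_i$ simultaneously; reassembled this gives $\mu_i=-a_i$, $\nu_i=c_i$, and when $a_i>0$, $c_i>0$ the pattern is $\mu_i<0<\nu_i$, which violates the second bullet of Definition \ref{defi}, so no contradiction with MPEC quasi-normality arises. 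This is not merely presentational: take $d=2$, $\cD=\Re^2$, no $g,h$, $G(z)=z_1+z_2^2$, $H(z)=z_1-z_2^2$, $\bz=0$. Any candidate multiplier must satisfy $\mu+\nu=0$, so the sign condition forces $\mu=\nu=0$ and MPEC quasi-normality holds vacuously; yet the piece $\{G=0,\ H\ge 0\}=\{0\}$ fails ordinary quasi-normality (take $a=c=1$ and $z^k=(0,1/k)$) and in fact fails the local error bound at $0$ (at $z=(0,t)$ its residual is $2t^2$ while its distance to the piece is $|t|$). Hence the piecewise reduction cannot deliver the quasi-normality case at all: the per-piece error bound may fail even though the MPEC error bound holds. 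The paper handles this case directly by citing Theorem 5.2 of \cite{guoyezhang13}, where MPEC quasi-normality is shown to imply the error bound via an enhanced Fritz John/exact penalization argument applied to the system with the geometric constraint $(G(z),H(z))\in\cC^l$, with no piecewise decomposition; you would need to invoke such a result (or reproduce that argument) rather than pass through the pieces.
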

\begin{proof}
If the MPEC linear condition holds, then it is easy to see that the perturbed constrained system $\Omega(y^g,y^h,y^G,y^H)$ defined in \eqref{pconstraineds} is a polyhedral multifunction and hence the local error bound condition holds \cite{robinson81}. Moreover, by \cite[Theorem 5.2]{guoyezhang13}, the local error bound condition follows from the MPEC quasi-normality immediately.
\end{proof}

\subsection{Optimal control problem with an abstract set constraint}

In this subsection, we consider the optimal control problem $(P_s)$ where
$$J(x(\cdot),u(\cdot)):=\int_{t_0}^{t_1} F(t,x(t),u(t))dt +f(x(t_0),x(t_1)).$$
Here $F:[t_0,t_1]\times \Re^n\times \Re^m\to \Re$ and $f:\Re^n\times \Re^n\to \Re$.
The basic hypotheses on the problem data, in force throughout this subsection, are the following: $F(\cdot), \phi(\cdot)$ are ${\cal L}\times {\cal B}$ measurable, $S(\cdot)$ is ${\cal L}$ measurable, and $f(\cdot)$ is locally Lipschitz continuous, where ${\cal L}\times {\cal B}$ denotes the $\sigma$-algebra of subsets of appropriate spaces generated by product sets $M\times N$ where $M$ is a Lebesgue $({\cal L})$ measurable subset in $\Re$ and $N$ is a Borel $({\cal B})$ measurable subset in $\Re^n\times \Re^m$.


We refer to any absolutely continuous function as  an arc.
An admissible pair for $(P_s)$  is a pair of functions $(x(\cdot), u(\cdot))$ on $[t_0,t_1]$ for which $u(\cdot)$ is a control and $x(\cdot)$ is an arc that satisfies all the constraints in $(P_s)$.
Given a measurable radius function $R:[t_0,t_1]\to (0,+\infty]$, as in  \cite{C-P}, we say that an admissible pair $(x^*(\cdot),u^*(\cdot))$ is a local minimizer of radius $R(\cdot)$ for $(P_s)$ if there exists $\epsilon >0$ such that for every pair $(x(\cdot),u(\cdot))$ admissible for $(P_s)$ which also satisfies
$$
\|x(t)-x^*(t)\| \leq \epsilon,\quad \|u(t)-u^*(t)\|\leq R(t)\ \ {\rm  a.e.} \ t\in[t_0,t_1],\quad  \int_{t_0}^{t_1}\|\dot{x}(t)-\dot{x}^*(t)\|\,dt\leq \epsilon,
$$
we have $J(x^*(\cdot),u^*(\cdot))\leq J(x(\cdot),u(\cdot))$.  Note that the so-called $W^{1,1}$ local minimizer in the control literature is actually the case where the radius function $R(\cdot)$ is identically $+\infty$.

Let $(x^*(\cdot),u^*(\cdot))$ be a  local minimizer of radius $R(\cdot)$ for $(P_s)$.
For every given $t\in[t_0,t_1]$, a radius function $R(\cdot)$, and a positive constant $\epsilon$,  we define a neighborhood of the point $(x^*(t),u^*(t))$ as follows:
\begin{equation}
S_*^{\epsilon,R}(t):=\big\{(x,u)\in S(t): \|x-x^*(t)\|\leq\epsilon, \|u-u^*(t)\|\leq R(t)\big\}.\label{neighbor}
\end{equation}

%

Other than the basic hypotheses on the problem data, we also assume that the following assumptions hold for  $(P_s)$.

\begin{assu}\label{ass1}
{\rm (i)} There exist measurable functions $k_x^\phi(\cdot),k_x^F(\cdot),k_u^\phi(\cdot), k_u^F(\cdot)$ such that for almost every $t\in[t_0,t_1]$ and for every $(x^1,u^1),(x^2,u^2)\in S_*^{\epsilon,R}(t)$, we have
\begin{eqnarray*}
\|\phi(t,x^1,u^1)-\phi(t,x^2,u^2)\|&\leq& k_x^\phi(t)\|x^1-x^2\|+ k_u^\phi(t)\|u^1-u^2\|,\\
|F(t,x^1,u^1)-F(t,x^2,u^2)|&\leq& k_x^F(t)\|x^1-x^2\|+ k_u^F(t)\|u^1-u^2\|.
\end{eqnarray*}

{\rm (ii)} There exists a positive measurable function $k_S(\cdot)$ such that for almost every $t\in [t_0,t_1]$, the bounded slope condition holds:
\begin{eqnarray}\label{bsc}
(x,u) \in S_*^{\epsilon,R}(t),\ (\alpha,\beta)\in {\cal N}^P_{S(t)}(x,u)\Longrightarrow \|\alpha\|\leq k_S(t)\|\beta\|.
\end{eqnarray}

{\rm (iii)} The functions $k_x^\phi(\cdot),k_x^F(\cdot),k_S(\cdot)[k_u^\phi(\cdot)+k_u^F(\cdot)]$ are integrable and there exists a positive number $\eta$ such that $R(t)\geq \eta k_S(t)\ a.e.\ t\in[t_0,t_1]$.
\end{assu}

Assumption \ref{ass1}(i) can be seen as a local Lipschitz condition in variable $(x,u)$ with measurable Lipschitz constants. This condition is automatically satisfied with time independent Lipschitz constants when $u^*(\cdot)$ is bounded over $[t_0,t_1]$, the radius function $R(\cdot)$ is a positive constant function, and the functions $F(\cdot),\phi(\cdot)$ are locally Lipschitz continuous in variable $(t,x,u)$. Assumption \ref{ass1}(ii) is a key condition proposed in \cite{C-P} to derive the necessary optimality conditions. We will investigate some sufficient conditions for such an assumption to hold in our problem setting in Section 3.

%

For a general optimal differential inclusion problem
\begin{eqnarray*}
\min && f(x(t_0),x(t_1))\\
{\rm s.t.} && \dot{x}(t)\in F_t(x(t))\quad\, {\rm a.e.}\ \ [t_0,t_1],\\
           && (x(a),x(b))\in E,
\end{eqnarray*}
where $F_t:\Re^n\rightrightarrows \Re^n$ is a multifunction, Clarke \cite{C05} has derived a new state of the art necessary optimality conditions in the optimal control literature. These conditions are stratified in that both the hypotheses and the conclusions are formulated relative to a given radius function. However, it should be noted that for a point $v$ lying on the boundary of $F_t(x^*(t))\cap {\rm cl}\,\cB_{R(t)}(u^*(t))$, one may not find a sequence $\{v^k\}$ in $F_t(x^*(t))\cap \cB_{R(t)}(u^*(t))$ such that $v^k\to v$ if $F_t(x^*(t))$ is disconnected. Thus, the derived Weierstrass condition in \cite[Theorems 2.3.3 and 3.1.1, and Corollary 3.5.3]{C05} should hold only relative to the open ball  $\cB_{R(t)}(u^*(t))$ instead of the closed ball ${\rm cl}\,\cB_{R(t)}(u^*(t))$. In a recent paper \cite{b}, Bettiol et al. also proved the stratified necessary optimality conditions for an optimal differential inclusion problem involving additional pathwise state constraints in \cite[Theorem 2.1]{b} and pointed out that the Weierstrass condition may not hold with full radius by a counter example \cite[Example 2]{b}. Recently, Clarke and De Pinho \cite[Theorem 2.1]{C-P} derived the stratified necessary optimality conditions for $(P_s)$ by recasting the problem as an equivalent optimal different  inclusion problem and applying the corresponding necessary optimality conditions from  \cite[Corollary 3.5.3]{C05}.  In the following, using the same proof technique as in \cite[Theorem 2.1]{C-P}, we give a stratified necessary optimality condition for $(P_s)$ which will be used in obtaining our main results. Our results differ from \cite[Theorem 2.1]{C-P} in two aspects.  Firstly, our Euler inclusion (c) is slightly sharper  than that in \cite[Theorem 2.1]{C-P}. Secondly, the Weierstrass condition (d) holds only on the open ball  $\cB_{R(t)}(u^*(t))$ instead of the closed ball ${\rm cl}\,\cB_{R(t)}(u^*(t))$.

\begin{thm}\label{lema}
Let $(x^*(\cdot),u^*(\cdot))$ be a   local minimizer of radius $R(\cdot)$ for $(P_s)$ and Assumption \ref{ass1} hold.
Then there exist a number $\lambda_0\in\{0,1\}$ and an arc $p(\cdot)$ such that
\begin{itemize}
\item[{\rm (a)}] the nontriviality condition holds: $(\lambda_0,p(t))\neq0\quad \forall t\in [t_0,t_1]$;
\item[{\rm (b)}] the transversality condition  holds:
$$(p(t_0),-p(t_1))\in \lambda_0\partial^L f(x^*(t_0),x^*(t_1))+{\cal N}_E^L(x^*(t_0),x^*(t_1));$$
\item[{\rm (c)}] the Euler inclusion holds: For almost every $t\in[t_0,t_1]$,
\begin{eqnarray}\label{euler adjoint}
&&(\dot{p}(t),0) \in {\rm co}\big\{(w,0): (w,0)\in \partial^L\big\{\langle -p(t),\phi(t,\cdot,\cdot)\rangle+\lambda_0 F(t,\cdot,\cdot)\big\}(x^*(t),u^*(t))\nonumber\\
&&\qquad\qquad\quad +{\cal N}^L_{S(t)}(x^*(t),u^*(t))\big\};
\end{eqnarray}

\item[{\rm (d)}] the Weierstrass condition holds: For almost every $t\in[t_0,t_1]$,
\begin{equation*}
\begin{array}{l}
(x^{*}(t),u)\in S(t), \ \|u-u^*(t)\|< R(t)\quad \Longrightarrow\\[3pt]
\langle p(t),\phi(t,x^*(t),u) \rangle -\lambda_0F(t,x^*(t),u) \leq \langle p(t), \phi (t,x^*(t),u^*(t)) \rangle -\lambda_0 F(t,x^*(t),u^*(t)).
\end{array}
\end{equation*}
\end{itemize}
\end{thm}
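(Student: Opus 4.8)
The plan is to reduce Theorem~\ref{lema} to the known stratified necessary optimality conditions for optimal differential inclusion problems, following the recasting technique of Clarke and De Pinho~\cite{C-P}. First I would introduce the augmented state $y(\cdot):=(x(\cdot),z(\cdot))$ where $z(\cdot)$ is a scalar arc with $\dot z(t)=F(t,x(t),u(t))$ and $z(t_0)=0$, so that $J(x(\cdot),u(\cdot))=z(t_1)+f(x(t_0),x(t_1))$ becomes a Mayer-type cost in $y$. Then I would define the velocity multifunction
\[
\widetilde F_t(y):=\big\{(\phi(t,x,u),\,F(t,x,u)+r): u\in\Re^m,\ (x,u)\in S(t),\ r\geq0\big\},
\]
which converts the explicit control and the mixed set constraint into a differential inclusion $\dot y(t)\in\widetilde F_t(y(t))$ together with the endpoint constraint $(x(t_0),x(t_1))\in E$, $z(t_0)=0$ (the slack variable $r\geq0$ handles the cost direction, exactly as in \cite{C05,C-P}). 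The local minimizer $(x^*(\cdot),u^*(\cdot))$ of radius $R(\cdot)$ for $(P_s)$ corresponds to a local minimizer $y^*(\cdot)$ of radius $R(\cdot)$ for this differential inclusion problem, where the radius is applied to the $u$-component via the parametrization.

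Next I would verify that Assumption~\ref{ass1} delivers precisely the hypotheses needed for \cite[Corollary~3.5.3]{C05} (equivalently \cite[Theorem~2.1]{b}) applied to $\widetilde F_t$: Assumption~\ref{ass1}(i) gives the pseudo-Lipschitz/integrable-Lipschitz behaviour of $\widetilde F_t$ in the state on the tube around $y^*(\cdot)$; Assumption~\ref{ass1}(ii), the bounded slope condition on $\cN^P_{S(t)}$, together with \ref{ass1}(iii) and the lower bound $R(t)\geq\eta k_S(t)$, ensures the requisite boundedness of the relevant normal cones and the compatibility of the radius function; and the measurability hypotheses on $F,\phi,S$ give the measurability of $\widetilde F_t$ as a multifunction. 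Applying that result produces an arc $(p(\cdot),-\lambda_0)$ (the adjoint to $y=(x,z)$, with the constant second component $-\lambda_0$ arising from the trivial $z$-dynamics and $\lambda_0\in\{0,1\}$ after normalization), the nontriviality condition (a), the transversality condition (b) (the $z$-part of transversality forces the second adjoint component to equal $-\lambda_0$ at $t_1$, hence everywhere), and the Euler and Weierstrass conditions for $\widetilde F_t$. Unwinding the coderivative of $\widetilde F_t$ through the sum rule and the structure of its graph — the constraint $(x,u)\in S(t)$ contributes $\cN^L_{S(t)}(x^*(t),u^*(t))$ and the functions $\phi,F$ contribute $\partial^L\langle(-p(t),\lambda_0),(\phi,F)\rangle$ — yields the Euler inclusion (c); here the $\mathrm{co}$ around the whole bracket (rather than only around the subdifferential term) is exactly what makes (c) slightly sharper than \cite[Theorem~2.1]{C-P}, and it comes for free from the convexified Euler inclusion of \cite{C05} for differential inclusions. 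The Weierstrass condition for $\widetilde F_t$, read off for velocities $(\phi(t,x^*(t),u),F(t,x^*(t),u))$ with $(x^*(t),u)\in S(t)$, gives (d).

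The main obstacle — and the one genuine point of care, which is the whole reason for stating a new theorem rather than quoting \cite{C-P} verbatim — is the radius on which the Weierstrass condition holds. As the excerpt already notes, for a velocity $v$ on the boundary of $\widetilde F_t(y^*(t))\cap\mathrm{cl}\,\cB_{R(t)}(u^*(t))$ one cannot in general approximate $v$ by velocities lying in the \emph{open} ball when $\widetilde F_t(y^*(t))$ is disconnected (which is typical here because $S(t)$ carries a complementarity constraint, so the admissible $u$-set is a union of pieces). Consequently the Weierstrass inequality produced by \cite[Corollary~3.5.3]{C05} is only legitimately valid for $\|u-u^*(t)\|<R(t)$, i.e.\ on the open ball, and I would state and use it in that form; this is the discrepancy with \cite[Theorem~2.1]{C-P} flagged in the text and is consistent with the counterexample \cite[Example~2]{b}. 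A secondary technical point is to check that the slack-variable device $r\geq0$ does not spuriously enlarge the normal cone to $\mathrm{gph}\,\widetilde F_t$ in the $z$-direction — it only forces $\lambda_0\geq0$, which after rescaling gives $\lambda_0\in\{0,1\}$ — and that the integrability clauses in Assumption~\ref{ass1}(iii) are exactly those under which the adjoint arc $p(\cdot)$ is absolutely continuous and the Euler inclusion holds a.e. Once these points are handled, items (a)--(d) are just the transcription of the differential-inclusion conclusions back to the original data of $(P_s)$.
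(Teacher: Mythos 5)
There is a genuine gap, and it sits exactly at the step you dispose of with ``the radius is applied to the $u$-component via the parametrization.'' In your reduction the control is projected out: the differential inclusion $\dot y(t)\in\widetilde F_t(y(t))$ only sees velocities, so the stratified hypotheses and conclusions of \cite[Corollary 3.5.3]{C05} (or \cite[Theorem 2.1]{b}) are formulated with a radius bounding $\|\dot y(t)-\dot y^*(t)\|$, not $\|u-u^*(t)\|$; since distinct controls can generate the same velocity, a local minimizer of radius $R(\cdot)$ in the control sense is not a local minimizer of radius $R(\cdot)$ for $\widetilde F_t$, and Assumption \ref{ass1}(ii)--(iii) cannot be matched to the pseudo-Lipschitz/radius hypotheses of the cited result for $\widetilde F_t$. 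Likewise, the Weierstrass conclusion you would obtain from $\widetilde F_t$ is an inequality over velocities in a ball, not over controls $u$ with $(x^*(t),u)\in S(t)$, $\|u-u^*(t)\|<R(t)$, so item (d) does not follow in the form stated. Finally, ``unwinding the coderivative of $\widetilde F_t$'' into $\partial^L\{\langle -p(t),\phi\rangle+\lambda_0F\}+\cN^L_{S(t)}$, with the extra restriction that the $u$-component vanish inside the convex hull, is not free: decomposing $\cN^L_{{\rm gph}\,\widetilde F_t}$ when the parametrizing variable $u$ has been projected away is precisely the delicate normal-cone estimate, and it is where the bounded slope condition must be used.

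The paper's proof (following \cite{C-P}) avoids this by keeping the control visible in velocity space: for each $M>1$ one works with the auxiliary set $G(t):=\{(x,\phi(t,x,u),c(t)(u-u^*(t))):(x,u)\in S(t)\}$, $c(t):=M(k_x^\phi(t)+k_S(t)k_u^\phi(t))/k_S(t)$, i.e.\ a dummy component whose ``velocity'' is the scaled control deviation. This is what converts the control radius $R(\cdot)$ into a velocity radius, lets Assumption \ref{ass1}(ii) deliver the Lipschitz-type hypotheses of \cite[Corollary 3.5.3]{C05}, and yields, for each $M$, multipliers $(\lambda_{0,M},p_M(\cdot))$ with the Weierstrass condition at radius $R(\cdot)M/(M+1)$ on the open ball. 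One then extracts a convergent subsequence as $M\to\infty$ to recover (a), (b), (d) (the open-ball form of (d) arises exactly from this limit, consistent with your remark about disconnectedness), and the Euler inclusion (c) is obtained by the normal-cone estimation of $\cN^L_{G(t)}$ carried out in \cite{C-P}; the case $F\not\equiv0$ is handled last by augmentation. Your augmentation of the cost and your handling of $\lambda_0$ are fine, but without the $M$-indexed device (or an equivalent way of encoding the control into the inclusion) the central steps of your argument do not go through as written.
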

\begin{proof}
First we consider the case where $F(\cdot)\equiv 0$.  Similarly as in the proof of \cite[Theorem 2.1]{C-P},  for any $M>1$, by applying \cite[Corollary 3.5.3]{C05} with the Weirestrass condition on a open ball $\cB_{R(t)}(u^*(t))$ or \cite[Theorem 2.1]{b}, we can obtain a number $\lambda_{0,M}\in \{0,1\}$ and an arc $p_M(\cdot)$ such that the nontriviality condition holds:
$$ \lambda_{0,M}+\|p_M(\cdot)\|_\infty =1;$$
the transversality condition holds:
 $$(p_M(t_0),-p_M(t_1))\in \lambda_{0,M}\partial^L f(x^*(t_0),x^*(t_1))+{\cal N}_E^L(x^*(t_0),x^*(t_1));$$
the Euler inclusion holds: For almost every $t\in[t_0,t_1]$,
$$(\dot{p}_M(t),0) \in {\rm co}\big\{(w,0): (w,p_M(t),0)\in {\cal N}^L_{G(t)}(x^*(t),\phi(t,x^*(t),u^*(t)),0)\big\},$$
where
\[
G(t):=\big\{(x,\phi(t,x,u),c(t)(u-u^*(t))):(x,u)\in S(t)\big\}
\]
with $c(t):=M(k_x^\phi(t)+k_S(t)k_u^\phi(t))/k_S(t)$; and the Weierstrass condition holds with radius $R(\cdot) M/(M+1)$: For almost every $t\in[t_0,t_1]$,
\begin{eqnarray}
&&(x^*(t), u) \in S(t), \|u-u^*(t)\|< R(t)M/(M+1)\nonumber\\
&&\qquad \qquad\qquad \qquad\Longrightarrow \langle p_M(t), \phi(t, x^*(t), u)\rangle \leq \langle p_M(t), \phi(t, x^*(t), u^*(t))\rangle.\label{Weiers}
\end{eqnarray}
As shown in \cite[Theorem 2.1]{C-P}, we can extract a convergent subsequence of the sequence $\{(\lambda_{0,M}, p_M(\cdot))\}_M$ with limit $(\lambda_0, p(\cdot))$  as $M\rightarrow \infty$.  Taking limits as $M\rightarrow \infty$ in the above nontriviality condition, transversality condition, Weierstrass condition, and Euler inclusion, we can obtain the results (a), (b), and (d) of this theorem for the case where $F(\cdot)\equiv 0$ and
$$
(\dot{p}(t),0) \in {\rm co}\big\{(w,0): (w,p(t),0)\in {\cal N}^L_{G(t)}(x^*(t),\phi(t,x^*(t),u^*(t)),0)\big\}.
$$
The Euler inclusion (c) of this theorem for the case where $F(\cdot)\equiv 0$ can be obtained by estimating the limiting normal cone of the above formula as in the last paragraph of Page 4521 in \cite{C-P}.

The general case in which a nonzero $F$ is present is reducible to the already treated one by augmentation as explained at the end of the proof of \cite[Theorem 2.1]{C-P}.
\end{proof}

Note that in the proof of Theorem \ref{lema}, we are unable to prove that
the Weierstrass condition holds with full radius $R(\cdot)$ as claimed in the proof of \cite[Theorem 2.1]{C-P}.  The reason is that for  a given $u$ lying on the boundary of the set
$$\Omega:=\{ u:  (x^{*}(t),u)\in S(t), \ \|u-u^*(t)\|\leq  R(t)\},$$ to show that
\begin{equation}
\langle p(t),\phi(t,x^*(t),u) \rangle  \leq \langle p(t), \phi (t,x^*(t),u^*(t)) \rangle \label{Weiers1} \end{equation} in the case where $F(\cdot)\equiv 0$,  we would need to find $u_M \in  \{ u:  (x^{*}(t),u)\in S(t), \ \|u-u^*(t)\|<R(t)M/(M+1)\}$ such that $u_M\to u$ as $M\to\infty$ and take limits in  (\ref{Weiers}) to derive the desired inequality (\ref{Weiers1}). But this may not be always possible if  $\Omega$ is disconnected.

\begin{rem}\label{rek}
Theorem \ref{lema} is a Fritz John (FJ) type necessary optimality condition. In the case where $\lambda_0=0$, no information on the objective functions  can be derived from the necessary optimality condition and it becomes useless. Thus, the case where $\lambda_0=1$ is desirable. It follows from Theorem \ref{lema} that if there is no nonzero abnormal multiplier, i.e., the following implication holds:
\begin{eqnarray*}
&&\left\{\begin{array}{l}
(p(t_0),-p(t_1))\in {\cal N}_E^L(x^*(t_0),x^*(t_1)),\\[3pt]
(\dot{p}(t),0)\in {\rm co}\,\big\{(w,0): (w,0)\in \partial^L\langle -p(t),\phi(t, \cdot,\cdot) \rangle (x^*(t),u^*(t))\\[3pt]
\qquad\qquad\qquad +{\cal N}^L_{S(t)}(x^*(t),u^*(t))\big\}\quad a.e.\ t\in[t_0,t_1],\\[3pt]
(x^{*}(t),u)\in S(t),\ \|u-u^*(t))\|< R(t)\\[3pt]
\qquad\qquad\Longrightarrow\langle p(t),\phi(t,x^*(t),u) \rangle \leq \langle p(t), \phi (t,x^*(t),u^*(t)) \rangle\quad a.e.\ t\in[t_0,t_1],\end{array}\right.\\
&&\Longrightarrow p(t)=0\ {\rm for\ some}\ t\in [t_0,t_1],
\end{eqnarray*}
then the conclusions of Theorem \ref{lema} hold with $\lambda_0=1$.
Such a condition is automatically satisfied in the case of free initial or final point, that is, $E=E_0\times \Re^n$ or $E=\Re^n\times E_1$ with closed subsets $E_0, E_1$ in $\Re^n$.  Supposing $\lambda_0=0$, the result (b) in Theorem \ref{lema} yields that $p(t_1)=0$ or $p(t_0)=0$, respectively, which contradicts the result (a) of this theorem. Throughout the paper, all the derived necessary optimality conditions are FJ type conditions. The desired case where $\lambda_0=1$ can be  obtained provided that there is no nonzero abnormal multiplier, which is always true if the initial or final point is free.
\end{rem}

\section{Necessary optimality conditions for OCPEC}\label{section3}

In this section, we develop  necessary optimality conditions for the OCPEC under the following basic hypothesises.

\begin{assu}[Basic assumption]\label{basic}
$F:[t_0,t_1]\times \Re^n\times \Re^m\to \Re$ and
$\phi:[t_0,t_1]\times \Re^{n}\times \Re^{m}\to \Re^n$ are ${\cal L}\times {\cal B}$ measurable,
$g:[t_0,t_1]\times \Re^{n}\times \Re^m\to \Re^{l_1}, h:[t_0,t_1]\times \Re^{n}\times \Re^m\to \Re^{l_2}$, and $G,H:[t_0,t_1]\times \Re^{n}\times \Re^m\to \Re^l$ are ${\cal L}$ measurable in variable $t$ and strict differentiable in variable $(x,u)$, $U:[t_0,t_1]\rightrightarrows \Re^m$ is a ${\cal L}$ measurable multifunction,
$f:\Re^n\times \Re^n\to \Re$ is locally Lipschitz continuous,  and $E$ is a closed subset in $\Re^n\times \Re^n$.
\end{assu}

In fact, we can easily extend our results to the case where the mappings $g(\cdot),h(\cdot), G(\cdot),H(\cdot)$ are only  Lipschitz continuous in variable $(x,u)$
and strictly differentiable at $(x^*(t),u^*(t))$. But for simplicity of exposition, we assume that they are strictly differentiable in variable $(x,u)$ as in Assumption \ref{basic}.

Given an admissible pair $(x(\cdot),u(\cdot))$ and a point $t\in [t_0,t_1]$, we define the index sets:
\begin{eqnarray*}
&&I^-_t(x,u):=\{i: g_i(t,x(t),u(t))<0\},\\
&&I^0_t(x,u):=\{i: g_i(t,x(t),u(t))=0\},\\
&&\cI^{+0}_t(x,u):=\{i: G_i(t,x(t),u(t))>0=H_i(t,x(t),u(t))\},\\
&&\cI^{00}_t(x,u):=\{i: G_i(t,x(t),u(t))=0=H_i(t,x(t),u(t))\},\\
&&\cI^{0+}_t(x,u):=\{i: G_i(t,x(t),u(t))=0<H_i(t,x(t),u(t))\}.
\end{eqnarray*}
Moreover, for any $(\lambda,\upsilon,\mu,\nu)\in \Re^{l_1}\times \Re^{l_2} \times \Re^{l} \times \Re^l$, we denote
\[
\Psi(t,x,u;\lambda,\upsilon,\mu,\nu):=g(t,x,u)^\top\lambda+h(t,x,u)^\top\upsilon-G(t,x,u)^\top\mu-H(t,x,u)^\top\nu.
\]


\begin{thm}\label{thmw}
Let $(x^*(\cdot),u^*(\cdot))$ be a  local minimizer of radius $R(\cdot)$ for the OCPEC and  Assumption \ref{basic} hold. Suppose that Assumption \ref{ass1} with $S(t)$ defined in  (\ref{cpc1}) is also satisfied.  If for almost every $t\in[t_0,t_1]$, the local error bound condition for the system representing $S(t)$ as in (\ref{cpc1})
holds at $(x^*(t),u^*(t))$, then there exist a number $\lambda_0\in\{0,1\}$, an arc $p(\cdot)$, and measurable functions $\lambda^g:\Re\to \Re^{l_1},\lambda^h:\Re\to \Re^{l_2},\lambda^G:\Re\to \Re^{l},\lambda^H:\Re\to \Re^{l}$ such that the following conditions hold:
\begin{itemize}
\item[{\rm (i)}] the nontriviality condition $(\lambda_0,p(t))\neq0\quad \forall t\in [t_0,t_1]$;
\item[{\rm(ii)}] the transversality condition
$$(p(t_0),-p(t_1))\in \lambda_0\partial^L f(x^*(t_0),x^*(t_1))+{\cal N}_E^L(x^*(t_0),x^*(t_1));$$
\item[{\rm(iii)}] the Euler adjoint inclusion: For almost every $t\in[t_0,t_1]$,
\begin{eqnarray*}\label{w-euler}
\begin{array}{l}
(\dot{p}(t),0)\in \partial^C\big\{\langle -p(t),\phi(t,\cdot,\cdot)\rangle+\lambda_0 F(t,\cdot,\cdot)\big\}(x^*(t),u^*(t))
+\{0\}\times {\cal N}^C_{U(t)}(u^*(t))
\\[5pt]
\qquad \qquad\ \;+ \nabla_{x,u}\Psi(t,x^*(t),u^*(t);\lambda^g(t),\lambda^h(t),\lambda^G(t),\lambda^H(t)),\\[4pt]
 \lambda^g(t)\geq 0,\ \lambda_i^g(t)=0\ \forall i\in I^-_t(x^*,u^*),\\[4pt]
 \lambda^G_i(t)=0\  \forall i\in \cI^{+0}_t(x^*,u^*),\ \lambda^H_i(t)=0\  \forall i\in \cI^{0+}_t(x^*,u^*);
\end{array}
\end{eqnarray*}

\item[{\rm(iv)}] the Weierstrass condition for radius $R(\cdot)$: For almost every $t\in[t_0,t_1]$,
\begin{equation*}
\begin{array}{l}
(x^{*}(t),u)\in S(t), \ \|u-u^*(t)\|< R(t)\quad \Longrightarrow\\[4pt]
\langle p(t),\phi(t,x^*(t),u) \rangle -\lambda_0F(t,x^*(t),u) \leq \langle p(t), \phi (t,x^*(t),u^*(t)) \rangle -\lambda_0 F(t,x^*(t),u^*(t)).
\end{array}
\end{equation*}
\end{itemize}
\end{thm}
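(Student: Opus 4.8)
The plan is to apply Theorem~\ref{lema} to the reformulated problem $(P_s)$ with the abstract mixed constraint $S(t)$ given by \eqref{cpc1}, and then to convert the abstract Euler inclusion (c) into the explicit form (iii) by estimating the limiting normal cone ${\cal N}^L_{S(t)}(x^*(t),u^*(t))$. Under Assumption~\ref{basic} the multifunction $S(\cdot)$ in \eqref{cpc1} is ${\cal L}$ measurable (it is assembled from data that is ${\cal L}$ measurable in $t$ together with the ${\cal L}$ measurable multifunction $U(\cdot)$), and Assumption~\ref{ass1} for this $S(\cdot)$ is part of the hypotheses; hence Theorem~\ref{lema} applies and yields a number $\lambda_0\in\{0,1\}$ and an arc $p(\cdot)$ satisfying its conclusions (a)--(d). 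Conclusions (a), (b), and (d) are literally conclusions (i), (ii), and (iv) of the present statement, so the only thing left is to derive the Euler adjoint inclusion (iii) from (c).

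The heart of the argument is the normal cone estimate. For a.e.\ $t$ the local error bound condition holds at $(x^*(t),u^*(t))$ for the system representing $S(t)$ in \eqref{cpc1}, equivalently the perturbed constrained system \eqref{pconstraineds} is calm at the corresponding point. Therefore the calmness-based intersection rule for limiting normal cones (see, e.g., \cite{henrion-outrata,guoyezhang13}), combined with the strict differentiability of $g,h,G,H$, the coordinatewise form of the limiting normal cone to the nonpositive orthant (for the $g$-block), the full space (for the $h$-block), and Proposition~\ref{normalcone} for the complementarity-cone block --- applied with $(a,b)=\big(G(t,x^*(t),u^*(t)),H(t,x^*(t),u^*(t))\big)$ and $(\alpha,\beta)=(-\lambda^G,-\lambda^H)$ --- shows that every element of ${\cal N}^L_{S(t)}(x^*(t),u^*(t))$ can be written as
\[
\nabla_{x,u}\Psi\big(t,x^*(t),u^*(t);\lambda^g,\lambda^h,\lambda^G,\lambda^H\big)+(0,v),\qquad v\in{\cal N}^L_{U(t)}(u^*(t)),
\]
for some multipliers with $\lambda^g\ge0$, $\lambda^g_i=0$ for $i\in I^-_t(x^*,u^*)$, $\lambda^G_i=0$ for $i\in{\cal I}^{+0}_t(x^*,u^*)$, $\lambda^H_i=0$ for $i\in{\cal I}^{0+}_t(x^*,u^*)$, and in fact with the stronger Mordukhovich-type condition $\lambda^G_i>0,\lambda^H_i>0$ or $\lambda^G_i\lambda^H_i=0$ for $i\in{\cal I}^{00}_t(x^*,u^*)$.

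To conclude I would substitute this estimate into the Euler inclusion (c). Set ${\cal H}_t:=\langle -p(t),\phi(t,\cdot,\cdot)\rangle+\lambda_0F(t,\cdot,\cdot)$ and let ${\cal M}_t$ be the set of vectors $\nabla_{x,u}\Psi(t,x^*(t),u^*(t);\lambda^g,\lambda^h,\lambda^G,\lambda^H)$ obtained when the multipliers satisfy only the \emph{weak} sign conditions $\lambda^g\ge0$, $\lambda^g_i=0\ (i\in I^-_t)$, $\lambda^G_i=0\ (i\in{\cal I}^{+0}_t)$, $\lambda^H_i=0\ (i\in{\cal I}^{0+}_t)$; since these conditions are convex, ${\cal M}_t$ is a convex cone. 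Because $\partial^L{\cal H}_t(x^*(t),u^*(t))\subseteq\partial^C{\cal H}_t(x^*(t),u^*(t))$ and ${\cal N}^L_{U(t)}(u^*(t))\subseteq{\cal N}^C_{U(t)}(u^*(t))$, the set $\partial^L{\cal H}_t(x^*(t),u^*(t))+{\cal N}^L_{S(t)}(x^*(t),u^*(t))$ lies in the convex set $\partial^C{\cal H}_t(x^*(t),u^*(t))+{\cal M}_t+\{0\}\times{\cal N}^C_{U(t)}(u^*(t))$, so taking the convex hull in (c) keeps $(\dot p(t),0)$ inside this convex set; unravelling it produces exactly the inclusion in (iii) together with the stated sign conditions. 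Finally, to obtain \emph{measurable} multiplier functions $\lambda^g(\cdot),\lambda^h(\cdot),\lambda^G(\cdot),\lambda^H(\cdot)$ I would apply a measurable selection theorem (e.g.\ \cite{RW}) to the set-valued map assigning to a.e.\ $t$ the nonempty closed set of multipliers realizing the inclusion in (iii); this map is measurable because $\dot p(\cdot)$ is measurable, $\partial^C{\cal H}_t(x^*(t),u^*(t))$ is compact by the local Lipschitz behavior of $\phi(t,\cdot,\cdot)$ and $F(t,\cdot,\cdot)$ in Assumption~\ref{ass1}(i), and the subdifferential and normal-cone mappings have closed graphs.

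The step I expect to be the main obstacle is the normal cone estimate: it requires invoking the calmness/local-error-bound--based intersection rule in the present form, carrying along the abstract control constraint $u\in U(t)$ through ${\cal N}^L_{U(t)}(u^*(t))$, and then reading off the correct sign pattern from Proposition~\ref{normalcone}. I also want to flag that the passage from the limiting objects in (c) to the Clarke objects in (iii) is precisely where the sharper Mordukhovich-type information on the degenerate indices ${\cal I}^{00}_t$ is destroyed --- the convex hull of the degenerate-index sign set $\{(\lambda^G_i,\lambda^H_i):\lambda^G_i>0,\lambda^H_i>0\ \text{or}\ \lambda^G_i\lambda^H_i=0\}$ is all of $\Re^2$ --- which is exactly why only the weak stationarity (iii) is obtained at this stage, and why the paper must later bring in the Weierstrass condition to recover more sign information on those multipliers.
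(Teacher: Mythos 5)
Your proposal follows essentially the same route as the paper's proof: apply Theorem \ref{lema} to $(P_s)$ with $S(t)$ as in (\ref{cpc1}) to get (i), (ii), (iv), estimate ${\cal N}^L_{S(t)}(x^*(t),u^*(t))$ under the local error bound via Proposition \ref{normalcone} (the paper invokes \cite[Proposition 3.4]{I-O}) carrying the $U(t)$-block along, substitute into the Euler inclusion, and produce measurable multipliers by a measurable selection/implicit measurable function argument from \cite{RW}; your closing observation that the convexification destroys the M-type sign information on ${\cal I}^{00}_t$ is exactly the paper's point. The only inessential difference is bookkeeping: the paper resolves the convex hull by Carath\'{e}odory's theorem and applies \cite[Theorem 14.16]{RW} to the pieces $(\alpha,\beta^j,\gamma^j)$, setting $\lambda^G=\sum_j\alpha_j\beta^j$, $\lambda^H=\sum_j\alpha_j\gamma^j$, whereas you absorb the hull into the already convex superset $\partial^C\{\cdot\}+{\cal M}_t+\{0\}\times{\cal N}^C_{U(t)}(u^*(t))$ and then select the multipliers of (iii) directly.
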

\begin{proof}
For simplicity in the proof, we omit the equality and inequality constraints (\ref{cons}),
and the control constraint (\ref{control}) since we have checked that all the formulas of the proof have the corresponding counterparts when using $S(t)$ defined in (\ref{cpc1}) instead of (\ref{cpc}). 
Then  $(x^*(\cdot),u^*(\cdot))$ is a  local minimizer of radius $R(\cdot)$ for $(P_s) $ with $S(t)$ defined  as follows:
\begin{eqnarray}\label{cpc}
S(t)=\left\{(x,u):\big(G(t,x,u), H(t,x,u)\big)\in {\cal C}^l\right\}.
\end{eqnarray}
By virtue of Theorem \ref{lema}, we can easily get the results (i), (ii), and (iv) in this theorem. It now  suffices to show the result (iii) by Theorem \ref{lema}(c).
Since the local error bound condition holds at $(x^*(t),u^*(t))$ and the functions $G(t,\cdot, \cdot), H(t,\cdot, \cdot)$ are strictly differentiable, it  follows from Proposition \ref{normalcone} and \cite[Proposition 3.4]{I-O} that
\begin{eqnarray}\label{m-calcu}
\lefteqn{{\cal N}_{S(t)}^L(x^*(t),u^*(t)) } \nonumber \\
&& \subseteq \big\{ -\nabla_{x,u} G(t,x^*(t),u^*(t))\beta-\nabla_{x,u} H(t,x^*(t),u^*(t))\gamma:(\beta,\gamma)\in{\cal M}^*(t)\big\},
\end{eqnarray}
where
\begin{equation}\label{m-set}
{\cal M}^*(t):=\left\{(\beta,\gamma):\begin{array}{l}\beta_i=0\quad {\rm if}\ i\in \cI^{+0}_t(x^*,u^*)\\[2pt] \gamma_i=0\quad {\rm if}\ i\in\cI^{0+}_t(x^*,u^*)\\[2pt] \beta_i>0,\gamma_i>0 {\ \rm or \ } \beta_i\gamma_i=0 \quad{\rm if}\ i\in \cI^{00}_t(x^*,u^*)\end{array}\right\}.
\end{equation}
It then follows from (\ref{euler adjoint}) and \eqref{m-calcu} that for almost every $t\in[t_0,t_1]$,
\begin{equation}\label{thmC-4}
\begin{array}{l}
(\dot{p}(t),0)\in \partial^C\big\{\langle -p(t),\phi(t,\cdot, \cdot)\rangle+\lambda_0 F(t,\cdot, \cdot)\big\}(x^*(t),u^*(t))\\[4pt]
\qquad \qquad +{\rm co}\left\{-\nabla_{x,u}G(t,x^*(t),u^*(t))\beta - \nabla_{x,u}H(t,x^*(t),u^*(t))\gamma:(\beta,\gamma)\in  {\cal M}^*(t)\right\}.
\end{array}
\end{equation}
By Carath\'{e}odory's theorem for the convex hull, it then follows from \eqref{thmC-4} that for almost every $t\in[t_0,t_1]$, there exist $\alpha \in \Delta :=\{\alpha\in\Re^{n+m+1}_+: \sum_{j=1}^{n+m+1} \alpha_j=1\}$ and $(\beta^j,\gamma^j)\in{\cal M}^*(t)\ (\forall j=1,\ldots,n+m+1)$ such that
\begin{eqnarray}
\psi(t, \alpha, \beta,\gamma)\in -(\dot{p}(t),0)+\partial^C\big\{\langle -p(t),\phi(t,\cdot,\cdot)\rangle+\lambda_0 F(t,\cdot,\cdot)\big\}(x^*(t),u^*(t)),
\end{eqnarray}
where
\[
\psi(t, \alpha,\beta,\gamma):=\sum_{j=1}^{n+m+1}\alpha_j \left[ \nabla_{x,u}G(t,x^*(t),u^*(t))\beta^j+ \nabla_{x,u}H(t,x^*(t),u^*(t))\gamma^j\right]
\]
is a Carath\'{e}odory mapping since it is  continuous in $(\alpha,\beta,\gamma)$ and  measurable in $t$ by virtue of  \cite[Theorem 14.13]{RW}. By \cite[Theorem 14.56 and Exercise 14.12]{RW}, the multifunction
$$
\partial^C\big\{\langle -p(t),\phi(t,\cdot,\cdot)\rangle+\lambda_0 F(t,\cdot,\cdot)\big\}(x^*(t),u^*(t))
$$
is measurable in $t$.
Hence,
\[
-(\dot{p}(t),0)+\partial^C\big\{\langle -p(t),\phi(t,\cdot,\cdot)\rangle+\lambda_0 F(t,\cdot,\cdot)\big\}(x^*(t),u^*(t))
\]
is measurable in $t$.  Moreover, by \cite[Theorem 14.26]{RW}, the multifunction ${\cal M}^*(t)$ is measurable in $t$.
Thus, it follows from the implicit measurable function theorem \cite[Theorem 14.16]{RW} that there exist
measurable functions $\alpha(\cdot)\in \Delta$  and $(\beta^j(\cdot),\gamma^j(\cdot))\in{\cal M}^*(\cdot)\ (\forall j=1,\ldots,n+m+1)$ such that for almost every $t\in[t_0,t_1]$,
\begin{eqnarray*}
\lefteqn{(\dot{p}(t),0)\in\partial^C\big\{\langle -p(t),\phi(t,\cdot,\cdot)\rangle+\lambda_0 F(t,\cdot,\cdot)\big\}(x^*(t),u^*(t))}
\\[3pt]
&& -\sum_{j=1}^{n+m+1}\alpha_j(t) \nabla_{x,u}G(t,x^*(t),u^*(t))\beta^j(t)- \sum_{j=1}^{n+m+1} \alpha_j(t)\nabla_{x,u}H(t,x^*(t),u^*(t))\gamma^j(t).
\end{eqnarray*}
Let
\[
\lambda^G(\cdot):=\sum_{j=1}^{n+m+1}\alpha_j(\cdot) \beta^j(\cdot),\quad \lambda^H(\cdot):=\sum_{j=1}^{n+m+1} \alpha_j(\cdot)\gamma^j(\cdot),
\]
which are both clearly  measurable in $t$. Moreover, since $(\beta^j(t),\gamma^j(t))\in{\cal M}^*(t)\ (\forall j=1,\ldots,n+m+1)$, it is not hard to see that
\[
\lambda^G_i(t)=0 \quad \forall i\in\cI^{+0}_t(x^*,u^*),\quad \lambda^H_i(t)=0\quad \forall i\in \cI^{0+}_t(x^*,u^*).
\]
Thus, the desired result follows immediately. The proof is complete.
\end{proof}

By virtue of the Weierstrass condition for radius $R(\cdot)$ (Theorem \ref{thmw}(iv)), we have that for almost every $t\in[t_0,t_1]$, $u^*(t)$ is a local minimizer of the following  MPEC:
\begin{eqnarray}\label{mpec}
\min\limits_{u\in U(t)} && -\langle p(t),\phi(t,x^*(t),u)\rangle+\lambda_0 F(t,x^*(t),u)\nonumber\\
{\rm s.t.}&&  g(t,x^*(t),u)\leq 0,\  h(t,x^*(t),u)=0,\\
&&0\leq G(t,x^*(t),u)\ \bot\ H(t,x^*(t),u)\geq 0.\nonumber
\end{eqnarray}
Hence, under some constraint qualifications for MPEC \eqref{mpec}, the popular necessary conditions such as the C-, M-, and S-stationarities may hold at $u^*(t)$; see, e.g., \cite{Scholtes-stationarity,kanzow10,Jane-jmaa2005,Jane-Jin}. This and Theorem \ref{thmw} motivate us to define the following stationarity conditions.

\begin{defi}\label{defi-w}
Let  $(x^*(\cdot),u^*(\cdot))$  be an admissible pair of the OCPEC. We say that the FJ type weak stationarity (W-stationarity) holds at $(x^*(\cdot),u^*(\cdot))$ if there exist a number $\lambda_0\in\{0,1\}$, an arc $p(\cdot)$, and  measurable functions $\lambda^g(\cdot),\lambda^h(\cdot),\lambda^G(\cdot),\lambda^H(\cdot)$ such that Theorem \ref{thmw}(i)-(iv) hold.

We say that the FJ type C-stationarity holds at $(x^*(\cdot),u^*(\cdot))$ if $(x^*(\cdot),u^*(\cdot))$ is W-stationary with arc $p(\cdot)$ and there exist measurable functions $\eta^g(\cdot),\eta^h(\cdot),\eta^G(\cdot),\eta^H(\cdot)$ such that for almost every $t\in[t_0,t_1]$,
\begin{eqnarray}
&&0\in \partial^L_u\big\{-\langle p(t), \phi(t,x^*(t),\cdot)\rangle+\lambda_0 F(t,x^*(t),\cdot)\big\}(u^*(t))\nonumber\\
&&\qquad\quad + \nabla_u\Psi(t,x^*(t),u^*(t);\eta^g(t),\eta^h(t),\eta^G(t),\eta^H(t))+{\cal N}^L_{U(t)}(u^*(t)),\label{w-l}\\[4pt]
&&\eta^g(t)\geq 0,\ \eta_i^g(t)=0\ \forall i\in I^-_t(x^*,u^*),\label{w-ine}\\[4pt]
&& \eta^G_i(t)=0\ \forall i\in \cI^{+0}_t(x^*,u^*),\ \eta^H_i(t)=0\ \forall i\in \cI^{0+}_t(x^*,u^*),\label{w-w}
\end{eqnarray}
and
\begin{eqnarray*}
\eta^G_i(t)\eta^H_i(t)\geq 0 \ \forall i \in \cI^{00}_t(x^*,u^*)\label{w-c}.
\end{eqnarray*}

We say that the FJ type M-stationarity holds at $(x^*(\cdot),u^*(\cdot))$ if $(x^*(\cdot),u^*(\cdot))$ is W-stationary with arc $p(\cdot)$ and there exist measurable functions $\eta^g(\cdot),\eta^h(\cdot),\eta^G(\cdot),\eta^H(\cdot)$ such that for almost every $t\in[t_0,t_1]$, \eqref{w-l}--\eqref{w-w} hold and
\begin{equation}\label{w-m}
\eta^G_i(t)> 0, \eta^H_i(t)>0 \mbox{ or } \eta^G_i(t)\eta^H_i(t) =0 \ \forall i \in \cI^{00}_t(x^*,u^*).
\end{equation}

We say that the FJ type S-stationarity holds at $(x^*(\cdot),u^*(\cdot))$ if $(x^*(\cdot),u^*(\cdot))$ is W-stationary with arc $p(\cdot)$ and there exist measurable functions $\eta^g(\cdot),\eta^h(\cdot),\eta^G(\cdot),\eta^H(\cdot)$ such that for almost every $t\in[t_0,t_1]$, \eqref{w-l}--\eqref{w-w} hold and
\[
\eta^G_i(t)\geq 0, \eta^H_i(t)\geq0 \ \forall i \in \cI^{00}_t(x^*,u^*).
\]

We will refer to them as the W-, C-, M-, and S-stationarities, respectively, if $\lambda_0=1$.
\end{defi}

In  Definition \ref{defi-w}, there are two sets of multipliers.
The ideal situation is when these two sets of multipliers are identical almost everywhere. In the case where the multipliers $\lambda^g(\cdot)$, $\lambda^h(\cdot)$, $\lambda^G(\cdot)$, $\lambda^H(\cdot)$ and
$\eta^g(\cdot),\eta^h(\cdot),\eta^G(\cdot),\eta^H(\cdot)$ can be chosen as the same almost everywhere, $(x^*(\cdot),u^*(\cdot))$ being  C-, M-, S-stationarities becomes that $(x^*(\cdot),u^*(\cdot))$ is W-stationary with multipliers
satisfying the following extra sign conditions:
\begin{eqnarray*}
&&  \lambda^G_i(t)\lambda^H_i(t)\geq 0 \quad \forall i \in \cI^{00}_t(x^*,u^*)\quad {\rm a.e.}\ t\in[t_0,t_1];\\
&& \lambda^G_i(t)> 0, \lambda^H_i(t)>0 \mbox{ or } \lambda^G_i(t)\lambda^H_i(t) =0 \quad \forall i \in \cI^{00}_t(x^*,u^*)\quad {\rm a.e.}\ t\in[t_0,t_1];\\
&&\lambda^G_i(t)\geq  0, \lambda^H_i(t)\geq 0  \quad \forall i \in \cI^{00}_t(x^*,u^*)\quad {\rm a.e.}\ t\in[t_0,t_1],
\end{eqnarray*} respectively.
Although we hope that these two sets of multipliers can be chosen as the same almost everywhere, the following example shows that it is not always possible.

\begin{ex}\rm
Consider the problem
\begin{eqnarray*}
\min && x(t_1)\\
{\rm s.t.} && \dot{x}(t)= u(t)\quad {\rm a.e.}\ t\in[t_0,t_1], \\
           &&  0\leq -u(t) \ \bot \ x(t)-u^2(t) \geq0\quad {\rm a.e.}\  t\in[t_0,t_1], \\
           && x(t_0)\leq0,
\end{eqnarray*}
where $x,u:\Re\to\Re$. Since $x(\cdot)$ is absolutely continuous and $x(t)\geq0$ for almost every $t\in[t_0,t_1]$, we must have $x(t)\geq0$ for every $t\in[t_0,t_1]$. Then it is easy to see that $(x^*(\cdot),u^*(\cdot))\equiv(0,0)$ is a minimizer of the above problem. Moreover, it is not hard to verify that for the system $\Omega:=\{u: F(u)\in \cC^1\}$ with $F(u):=(-u, x^*(t)-u^2)^T$ and $\cC^1$ defined as in (\ref{defi C}),
\[
\Re={\cal T}_\Omega(u^*(t))^o\subseteq \nabla F(u^*(t)){\cal N}_{\cC^1}(F(u^*(t)))=\Re,
\]
where ${\cal T}_\Omega(u^*(t))^o$ stands for the polar of the tangent cone to $\Omega$ at $u^*(t)$. It has been shown in \cite[Theorem 3.2]{guolin13} that this condition ${\cal T}_\Omega(u^*(t))^o\subseteq \nabla F(u^*(t)){\cal N}_{\cC^1}(F(u^*(t)))$  is a constraint qualification for M-stationarity at $u^*(t)$. Thus, for almost every $t\in [t_0,t_1]$, $u^*(t)=0$ is M-stationary to the problem
$$\min_u\ - p(t)u\quad {\rm s.t.} \quad 0\leq -u \ \bot \ x^*(t)-u^2 \geq 0.$$
By solving the M-stationarity condition at $(x^*(\cdot),u^*(\cdot))$, we have
\begin{eqnarray}
&& p(t_0)\geq 0,\ p(t_1)= -1,\label{ex1}\\
&& \dot{p}(t)=-\lambda^H(t),\ p(t)=\lambda^G(t) \quad {\rm a.e.}\ t\in[t_0,t_1],\label{ex2}\\
&& p(t) = \eta^G(t),\  \eta^G(t)>0, \eta^H(t)>0\ {\rm or}\ \eta^G(t)\eta^H(t)=0\quad {\rm a.e.}\ t\in[t_0,t_1].\label{ex4}
\end{eqnarray}
Since $p(\cdot)$ is absolutely continuous, by virtue of \eqref{ex1}, there must exist an interval $[t',t'']\subseteq[t_0,t_1]$ with $t'<t''$ such that
\[
p(t)<0,\ \dot{p}(t)<0\quad \forall t\in [t',t''].
\]
This together with \eqref{ex2}--\eqref{ex4} implies
\begin{eqnarray*}
&&\lambda^G(t)<0, \lambda^H(t)>0 \quad {\rm a.e.}\ t\in[t',t''],\\
&&\eta^G(t)<0, \eta^H(t)=0 \quad {\rm a.e.}\ t\in[t',t''],
\end{eqnarray*}
which shows that $\lambda^H(t) \not =\eta^H(t)$ for almost every $t\in[t',t'']$.
\end{ex}

We now show that the FJ type M-stationarity for the OCPEC in Definition \ref{defi-w} is necessary for optimality under certain constraint qualifications. Note that problem \eqref{mpec} is an MPEC with respect to variable $u$. In the following theorem, we will assume that some MPEC constraint qualifications for M-stationarity, which are qualifications to derive M-stationarity  for optimality, are satisfied.
The reader is referred to  \cite{kanzow10,Jane-jmaa2005,Jane-Jin,guolin13} and the references within for MPEC constraint qualifications for M-stationarity.

\begin{thm}\label{thmm}
Let $(x^*(\cdot),u^*(\cdot))$ be a  local minimizer of radius $R(\cdot)$ for the OCPEC and  Assumption \ref{basic} hold. Suppose that Assumption \ref{ass1} with $S(t)$ defined in  (\ref{cpc1}) is also satisfied. Then the FJ type M-stationarity holds at $(x^*(\cdot),u^*(\cdot))$  if for almost every $t\in[t_0,t_1]$, one of the following conditions holds:
\begin{itemize}
\item[\rm (a)] The local error bound condition for the system representing $S(t)$ as in (\ref{cpc1})
holds at $(x^*(t),u^*(t))$ and an MPEC constraint qualification for M-stationarity  holds at $u^*(t)$  for problem (\ref{mpec});

\item[\rm (b)] The MPEC linear condition holds for $S(t)$ defined in (\ref{cpc1}), i.e., functions $g(t,\cdot, \cdot)$, $h(t,\cdot, \cdot)$, $G(t,\cdot, \cdot)$, $H(t,\cdot, \cdot)$  are affine in $(x,u)$ and $U(t)$ is a union of finitely many polyhedral sets;

\item[\rm (c)] The MPEC quasi-normality holds at $u^*(t)$  for problem (\ref{mpec}), i.e., there is no nonzero multiplier $(\lambda,\upsilon,\mu,\nu)$ such that
\begin{itemize}
\item[--] $0\in \nabla_u \Psi(t,x^*(t),u^*(t);\lambda,\upsilon,\mu,\nu)+\cN^L_{U(t)}(u^*(t))$,
\item[--] $\lambda\geq0$,\ $\lambda_i=0\ \forall i\in I_t^-(x^*,u^*)$,\ $\mu_i=0\ \forall i\in \cI^{+0}_t(x^*,u^*),\ \nu_i=0\ \forall i\in \cI^{0+}_t(x^*,u^*)$,\ $\mu_i>0,\nu_i>0 \mbox{\ \rm or\ } \mu_i\nu_i=0\ \forall i\in \cI^{00}_t(x^*,u^*)$,
\item[--]  there exists a sequence $\{u^k\}\subseteq U(t)$ converging to $u^*(t)$ such that for
each $k$,
\begin{eqnarray*}
&& \lambda_i>0 \ \Longrightarrow \ g_i(t,x^*(t),u^k)>0,\quad \upsilon_i\neq0 \ \Longrightarrow \ \upsilon_ih_i(t,x^*(t),u^k)>0,\\
&&\mu_i\neq0  \ \Longrightarrow \ \mu_iG_i(t,x^*(t),u^k)<0, \quad \nu_i\neq0  \ \Longrightarrow \  \nu_iH_i(t,x^*(t),u^k)<0.
\end{eqnarray*}
\end{itemize}

\end{itemize}
\end{thm}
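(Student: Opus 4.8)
The plan is to derive the FJ type M-stationarity from Theorem \ref{thmw} (the W-stationarity) by upgrading the sign information on the multipliers associated with the degenerate indices $\cI^{00}_t(x^*,u^*)$, using the Weierstrass condition (Theorem \ref{thmw}(iv)). First I would observe that in each of the three cases (a), (b), (c) the local error bound condition for the system representing $S(t)$ as in (\ref{cpc1}) holds at $(x^*(t),u^*(t))$ for almost every $t$: under (a) it is assumed directly, and under (b) and (c) it follows from Proposition \ref{error bound}, since the MPEC linear condition (with $\cD=U(t)$ a finite union of polyhedra) and the MPEC quasi-normality at $u^*(t)$ for problem (\ref{mpec}) are precisely the hypotheses of that proposition applied to the constrained system defining $S(t)$ with $z=(x,u)$ (the constraints in $u$ only are a special instance). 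Hence Theorem \ref{thmw} applies and we obtain $\lambda_0\in\{0,1\}$, an arc $p(\cdot)$, and measurable multipliers $\lambda^g,\lambda^h,\lambda^G,\lambda^H$ satisfying Theorem \ref{thmw}(i)--(iv). This already gives the W-stationarity part of Definition \ref{defi-w}; it remains to produce the second set of multipliers $\eta^g,\eta^h,\eta^G,\eta^H$ satisfying (\ref{w-l})--(\ref{w-w}) together with the M-stationarity sign condition (\ref{w-m}).

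Next I would exploit Theorem \ref{thmw}(iv): for almost every $t$, $u^*(t)$ is a local minimizer (of radius $R(t)$, hence in particular a genuine local minimizer since $R(t)>0$) of the MPEC (\ref{mpec}) in the variable $u$. For such a fixed $t$, the constraint system of (\ref{mpec}) is exactly the system defining $S(t)$, so the relevant MPEC constraint qualification for M-stationarity is available: under (a) it is assumed, under (b) the MPEC linear condition holds for (\ref{mpec}) (affine data, $U(t)$ a finite union of polyhedra) which is an M-stationarity CQ, and under (c) the MPEC quasi-normality at $u^*(t)$ for (\ref{mpec}) is assumed and is known to be an M-stationarity CQ (see the references cited after Definition \ref{defi}, e.g.\ \cite{guoyezhang13,Jane-Jin}). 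Invoking the known finite-dimensional M-stationarity result for MPECs (e.g.\ \cite{kanzow10,Jane-jmaa2005,Jane-Jin,guolin13}) then yields, for each such $t$, multipliers $(\eta^g(t),\eta^h(t),\eta^G(t),\eta^H(t))$ with $0\in \partial^L_u\{-\langle p(t),\phi(t,x^*(t),\cdot)\rangle+\lambda_0F(t,x^*(t),\cdot)\}(u^*(t)) + \nabla_u\Psi(t,x^*(t),u^*(t);\eta^g(t),\eta^h(t),\eta^G(t),\eta^H(t)) + \cN^L_{U(t)}(u^*(t))$, together with (\ref{w-ine}), (\ref{w-w}) and (\ref{w-m}). (In case (b) one uses that for affine-in-$u$ data the Fritz John M-stationarity for (\ref{mpec}) is automatically of the normal form, or one notes that the objective is smooth so the abnormal case is excluded; I would state this carefully.)

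The remaining issue is \emph{measurability}: the pointwise-in-$t$ multipliers $\eta(t)$ must be chosen to depend measurably on $t$. I would handle this exactly as in the proof of Theorem \ref{thmw}: the M-stationarity condition can be written as a set-valued inclusion in $t$, where the right-hand side is a Carath\'{e}odory-type multifunction (measurable in $t$, built from $\partial^C$ of a measurable integrand via \cite[Theorems 14.13, 14.26, 14.56]{RW}), and then the measurable selection / implicit measurable function theorem \cite[Theorem 14.16]{RW} produces measurable functions $\eta^g(\cdot),\eta^h(\cdot),\eta^G(\cdot),\eta^H(\cdot)$; if Carath\'{e}odory's theorem for convex hulls is needed to pass from an inclusion in a convex hull to an explicit finite-combination form, it is applied as in Theorem \ref{thmw} and absorbed into the selection. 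This measurability argument, rather than the pointwise M-stationarity, is the main technical obstacle, since one must verify that the multifunction assigning to $t$ the set of admissible M-multipliers has closed graph and measurable values; I would argue this from the closedness of $\partial^L$, $\partial^C$, $\cN^L_{U(t)}$ (the latter via \cite[Theorem 14.26]{RW}) and the continuity of the gradients of $G,H,g,h$ in $(x,u)$. Once the measurable $\eta$'s are in hand, combining them with the W-stationarity data $(\lambda_0,p(\cdot),\lambda^g,\lambda^h,\lambda^G,\lambda^H)$ from the first step gives exactly the FJ type M-stationarity of Definition \ref{defi-w}, completing the proof.
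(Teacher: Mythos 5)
Your proposal is correct and follows essentially the same route as the paper's own proof: derive W-stationarity from Theorem \ref{thmw} after noting that the local error bound condition holds under each of (a), (b), (c) (via Proposition \ref{error bound} for (b) and (c)), then use the Weierstrass condition to view $u^*(t)$ as a local minimizer of the MPEC (\ref{mpec}) and invoke the known finite-dimensional M-stationarity results under the respective constraint qualifications, and finally select the second set of multipliers measurably via the implicit measurable function theorem \cite[Theorem 14.16]{RW}. The paper's proof is just a terser version of exactly these three steps.
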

\begin{proof}
First we observe that for almost every $t\in[t_0,t_1]$, the local error bound condition for the system representing $S(t)$ as in (\ref{cpc1}) holds at $(x^*(t),u^*(t))$ under either the MPEC linear condition in condition (b) or the MPEC quasi-normality in condition (c).  Thus, it follows from Theorem \ref{thmw} that $(x^*(\cdot),u^*(\cdot))$ is W-stationary under any one condition. Moreover,  conditions (a), (b), and (c) can all imply that for almost every $t\in[t_0,t_1]$,  there exist $\eta^g(t),\eta^h(t),\eta^G(t),\eta^H(t)$ such that \eqref{w-l}--\eqref{w-m} hold (\cite[Theorem 2.2]{Jane-jmaa2005} and \cite[Theorem 3.3]{kanzow10}). By the implicit measurable function theorem (see, e.g., \cite[Theorem 14.16]{RW}), the functions $\eta^g(\cdot),\eta^h(\cdot),\eta^G(\cdot),\eta^H(\cdot)$ can be chosen measurably. The proof is complete.
\end{proof}


We next derive the FJ type S-stationarity under the MPEC LICQ. It should be noted that the MPEC LICQ is generic and hence not a stringent assumption by \cite{Scholtes-MPECLICQ}.

\begin{thm}\label{thmS}
Let $(x^*(\cdot),u^*(\cdot))$ be a  local minimizer of radius $R(\cdot)$ for the OCPEC and Assumption \ref{basic} hold. Suppose that Assumption \ref{ass1} with $S(t)$ defined in  (\ref{cpc1}) is also satisfied.  Assume further that for almost every $t\in[t_0,t_1]$, $U(t)=\Re^m$ and the functions $F(t,\cdot, \cdot)$, $\phi(t,\cdot,\cdot)$ are strictly differentiable at $(x^*(t),u^*(t))$. If for almost every $t\in[t_0,t_1]$, the MPEC LICQ holds at $u^*(t)$ for problem (\ref{mpec}), i.e., the family of gradients
\begin{eqnarray*}
\lefteqn{\left\{\nabla_u g_i(t,x^*(t),u^*(t)):i\in I^0_t(x^*,u^*)\right\}\cup \left\{\nabla_u h_i(t,x^*(t),u^*(t)):i=1,\ldots,l_2\right\}}\\
&&\cup\left\{\nabla_u G_i(t,x^*(t),u^*(t)):i\in \cI_t^{0\bullet}(x^*,u^*)\right\}\cup \left\{\nabla_u H_i(t,x^*(t),u^*(t)): i\in \cI_t^{\bullet0}(x^*,u^*)\right\}
\end{eqnarray*}
where $\cI_t^{0\bullet}(x^*,u^*):=\cI_t^{0+}(x^*,u^*)\cup \cI_t^{00}(x^*,u^*)$ and $\cI_t^{\bullet0}(x^*,u^*):=\cI_t^{+0}(x^*,u^*)\cup \cI_t^{00}(x^*,u^*)$, is linearly independent, then the FJ type S-stationarity holds at $(x^*(\cdot),u^*(\cdot))$. Moreover, the multipliers $\eta^g(\cdot),\eta^h(\cdot),\eta^G(\cdot), \eta^H(\cdot)$ can be taken as equal to $\lambda^g(\cdot),\lambda^h(\cdot),\lambda^G(\cdot), \lambda^H(\cdot)$ almost everywhere. That is, there exist a number $\lambda_0\in\{0,1\}$, an arc $p(\cdot)$, and measurable functions $\lambda^g(\cdot),\lambda^h(\cdot),\lambda^G(\cdot), \lambda^H(\cdot)$ such that $(x^*(\cdot),u^*(\cdot))$ is W-stationary and for almost every $t\in[t_0,t_1]$, the following extra sign condition holds:
\[
\lambda^G_i(t)\geq  0, \lambda^H_i(t)\geq 0  \quad \forall i \in \cI^{00}_t(x^*,u^*).
\]
\end{thm}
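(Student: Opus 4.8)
The plan is to bootstrap from Theorem \ref{thmw}: under the stated hypotheses we already obtain W-stationarity, so the two things left to do are to install the sign relation $\lambda^G_i(t)\ge 0,\ \lambda^H_i(t)\ge 0$ on the biactive index set and to check that \emph{the same} multipliers serve in the stationarity system \eqref{w-l}--\eqref{w-w} of Definition \ref{defi-w}. First I would verify that Theorem \ref{thmw} applies. The MPEC LICQ at $u^*(t)$ for problem \eqref{mpec} is a linear independence condition on the $u$-gradients of the active constraints; adjoining the missing $x$-blocks cannot destroy linear independence (project any vanishing linear combination onto its $u$-part), so the MPEC LICQ for the system representing $S(t)$ as in \eqref{cpc1} holds at $(x^*(t),u^*(t))$ for a.e.\ $t$, with $\cD=\Re^n\times U(t)=\Re^{n+m}$. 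The MPEC LICQ implies the MPEC quasi-normality, hence by Proposition \ref{error bound} the local error bound condition holds at $(x^*(t),u^*(t))$ for a.e.\ $t$. Theorem \ref{thmw} then yields $\lambda_0\in\{0,1\}$, an arc $p(\cdot)$, and measurable $\lambda^g(\cdot),\lambda^h(\cdot),\lambda^G(\cdot),\lambda^H(\cdot)$ for which (i)--(iv) hold; in particular $(x^*(\cdot),u^*(\cdot))$ is W-stationary.

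Next I would use the extra regularity ($U(t)=\Re^m$, and $F(t,\cdot,\cdot),\phi(t,\cdot,\cdot)$ strictly differentiable at $(x^*(t),u^*(t))$) to turn the Euler adjoint inclusion (iii) into an equation: strict differentiability makes $\partial^C\{\langle -p(t),\phi(t,\cdot,\cdot)\rangle+\lambda_0 F(t,\cdot,\cdot)\}(x^*(t),u^*(t))$ a singleton, and $U(t)=\Re^m$ makes $\cN^C_{U(t)}(u^*(t))=\{0\}$. Reading off the $u$-block of the resulting $(x,u)$-equation (the left-hand side of which vanishes in the $u$-components) gives, for a.e.\ $t$,
\[
0=\nabla_u\big\{-\langle p(t),\phi(t,x^*(t),\cdot)\rangle+\lambda_0 F(t,x^*(t),\cdot)\big\}(u^*(t))+\nabla_u\Psi(t,x^*(t),u^*(t);\lambda^g(t),\lambda^h(t),\lambda^G(t),\lambda^H(t)),
\]
which is precisely \eqref{w-l} with $(\eta^g,\eta^h,\eta^G,\eta^H)$ replaced by $(\lambda^g,\lambda^h,\lambda^G,\lambda^H)$, while the sign and support relations already in Theorem \ref{thmw}(iii) are exactly \eqref{w-ine}--\eqref{w-w}. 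Thus the multipliers $\lambda^g,\lambda^h,\lambda^G,\lambda^H$ already solve \eqref{w-l}--\eqref{w-w}; only their sign on $\cI^{00}_t(x^*,u^*)$ is still missing.

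To supply that sign I would freeze the state: by the Weierstrass condition (iv), for a.e.\ $t$ the point $u^*(t)$ is a local minimizer of the MPEC \eqref{mpec} in the variable $u$. Since the MPEC LICQ holds at $u^*(t)$ for \eqref{mpec}, the classical strong stationarity result for MPECs (see, e.g., \cite{Scholtes-stationarity,Ye99}) provides $\eta^g(t),\eta^h(t),\eta^G(t),\eta^H(t)$ satisfying \eqref{w-l}--\eqref{w-w} \emph{and} $\eta^G_i(t)\ge 0,\ \eta^H_i(t)\ge 0$ for all $i\in\cI^{00}_t(x^*,u^*)$. Now both $(\lambda^g,\lambda^h,\lambda^G,\lambda^H)(t)$ and $(\eta^g,\eta^h,\eta^G,\eta^H)(t)$ solve the same linear equation \eqref{w-l} with the same left-hand side, namely minus the gradient at $u^*(t)$ of the objective of \eqref{mpec}; subtracting, the difference of the two multiplier vectors produces a vanishing linear combination of the gradients $\nabla_u g_i\,(i\in I^0_t)$, $\nabla_u h_i\,(i=1,\ldots,l_2)$, $\nabla_u G_i\,(i\in\cI^{0\bullet}_t(x^*,u^*))$, $\nabla_u H_i\,(i\in\cI^{\bullet 0}_t(x^*,u^*))$ — all remaining components vanish by the support relations \eqref{w-ine}--\eqref{w-w} — and this is exactly the family the MPEC LICQ declares linearly independent. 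Hence $\lambda^g(t)=\eta^g(t)$, $\lambda^h(t)=\eta^h(t)$, $\lambda^G(t)=\eta^G(t)$, $\lambda^H(t)=\eta^H(t)$ for a.e.\ $t$, so the measurable multipliers obtained in the first step themselves satisfy $\lambda^G_i(t)\ge 0,\ \lambda^H_i(t)\ge 0$ on $\cI^{00}_t(x^*,u^*)$ a.e. This is the asserted FJ type S-stationarity with a single multiplier set, and measurability requires no separate argument since it is inherited from Theorem \ref{thmw}.

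I expect the delicate part to be the identification carried out in the third step: checking that the equation extracted from the $u$-block of the Euler inclusion and the equation \eqref{w-l} coming from the frozen MPEC have literally the same left-hand side (which uses that $\nabla_u\{\langle -p(t),\phi(t,\cdot,\cdot)\rangle+\lambda_0 F(t,\cdot,\cdot)\}(x^*(t),u^*(t))$ coincides with $\nabla_u\{-\langle p(t),\phi(t,x^*(t),\cdot)\rangle+\lambda_0 F(t,x^*(t),\cdot)\}(u^*(t))$), and that the nonzero components of both multiplier vectors are confined to exactly the index sets $I^0_t$, $\{1,\ldots,l_2\}$, $\cI^{0\bullet}_t(x^*,u^*)$, $\cI^{\bullet 0}_t(x^*,u^*)$ figuring in the MPEC LICQ, so that uniqueness applies componentwise. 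Everything else — the collapse of the Clarke subdifferential under strict differentiability, the passage from $u$-LICQ to $(x,u)$-LICQ, and the chain MPEC LICQ $\Rightarrow$ MPEC quasi-normality $\Rightarrow$ local error bound — is routine given the machinery recorded in Section \ref{section2}.
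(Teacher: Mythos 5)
Your proposal is correct and follows essentially the same route as the paper's proof: MPEC LICQ $\Rightarrow$ quasi-normality $\Rightarrow$ local error bound to invoke Theorem \ref{thmw}, then the Weierstrass condition to view $u^*(t)$ as a local minimizer of the frozen MPEC \eqref{mpec}, S-stationarity there under MPEC LICQ, and finally uniqueness of multipliers under LICQ to identify the two multiplier sets. You merely spell out the steps the paper leaves terse (the passage from $u$-LICQ to the joint $(x,u)$-system, the collapse of the Euler inclusion's $u$-block to \eqref{w-l} under strict differentiability and $U(t)=\Re^m$, and inheriting measurability from the $\lambda$'s rather than reinvoking the implicit measurable function theorem), which is a harmless refinement, not a different argument.
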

\begin{proof}
Under the MPEC LICQ assumption, by Proposition \ref{error bound}, it follows that for almost every $t\in[t_0,t_1]$, the local error bound condition for the system representing $S(t)$ as in (\ref{cpc1}) holds at $(x^*(t), u^*(t))$.  Thus, it follows from Theorem \ref{thmw} that $(x^*(\cdot),u^*(\cdot))$ is W-stationary. Moreover, for almost every $t\in[t_0,t_1]$, since the MPEC LICQ holds at $u^*(t)$, it then follows from \cite[Theorem 2]{Scholtes-stationarity} that there exist $\eta^g(t),\eta^h(t),\eta^G(t),\eta^H(t)$ such that \eqref{w-l}--\eqref{w-w} hold and
\[
\eta^G_i(t)\geq 0, \eta^H_i(t)\geq0 \quad i \in \cI^{00}_t(x^*,u^*).
\]
By the implicit measurable function theorem (see, e.g., \cite[Theorem 14.16]{RW}), the functions $\eta^g(\cdot),\eta^h(\cdot),\eta^G(\cdot),\eta^H(\cdot)$ can be chosen measurably. Thus, the first part of the theorem is derived.
Moreover, by the MPEC-LICQ assumption, it is not hard to see that $\lambda^g(t)=\eta^g(t),\lambda^g(t)=\eta^g(t),\lambda^G(t)=\eta^G(t), \lambda^H(t)=\eta^H(t)$ for almost every $t\in [t_0,t_1]$.
Therefore, the second part of the theorem follows immediately. The proof is complete.
\end{proof}

For problem $(P_s)$, if $S(t)=\Re^n\times U(t)$ for almost every $t\in[t_0,t_1]$ (which corresponds to the case of standard optimal control problem without mixed constraints), then the bounded slope condition \eqref{bsc} holds automatically  for almost every $t\in[t_0,t_1]$ since in this case, \eqref{bsc} becomes
\[
(x,u) \in S_*^{\epsilon,R}(t),\ \beta\in {\cal N}^P_{U(t)}(u)\Longrightarrow  k_S(t)\|\beta\|\geq0,
\]
which holds trivially if $k_S(t)\geq0$. If there exists a closed subset $X(t')\subseteq \Re^n$ and $\bar{x}^*(t')\in {\rm bd}\,X(t')$
satisfying $S(t')=X(t')\times \Re^m$ and
\[
{\rm dist}_{{\rm bd}\,X(t')}(x^*(t'))=\|x^*(t')-\bar{x}^*(t')\|\leq\epsilon,
\]
then \eqref{bsc} at time $t'$ never hold since there exists $0\neq\alpha\in {\cal N}^P_{X(t')}(\bar{x}^*(t'))$ by \cite[Exercise 6.19]{RW}. If the set of such a point $t'\in[t_0,t_1]$ is not of  measure zero, then the bounded slope condition in Assumption \ref{ass1} does not hold. As a consequence, the bounded slope condition can hardly hold for the case of the pure state constraint $S(t)=X(t)\times \Re^m$. Generally speaking, the bounded slope condition is a strong condition and  is also hard to verify. In the rest of this section, we will investigate sufficient conditions for the bounded slope condition to hold in our problem setting.

\begin{prop}\label{prop-bsc}
Assume that the local error bound condition for the system representing $S(t)$ as in (\ref{cpc1})
holds at every $(x,u)\in S_*^{\epsilon,R}(t)$ and
\begin{eqnarray}\label{bsc-ms}
&&\left\{\begin{array}{l}
(x,u)\in S_*^{\epsilon,R}(t),\ \zeta\in \cN^L_{U(t)}(u),\\[2pt]
\lambda\geq0,\ \lambda_i=0\ \forall i\in I_t^-(x,u),\\[2pt]
\mu_i=0\ \forall i\in \cI^{+0}_t(x,u),\ \nu_i=0\ \forall i\in \cI^{0+}_t(x,u),\\[2pt]
 \mu_i>0, \nu_i> 0\ {\rm or}\  \mu_i\nu_i=0\ \forall i\in \cI^{00}_t(x,u),
\end{array}\right.\nonumber\\[4pt]
&& \Longrightarrow \|\nabla_x \Psi(t,x,u;\lambda,\upsilon,\mu,\nu)\|\leq k_S(t)\|\nabla_u\Psi(t,x,u;\lambda,\upsilon,\mu,\nu)+\zeta\|.
\end{eqnarray}
Then the bounded slope condition (\ref{bsc}) holds.
\end{prop}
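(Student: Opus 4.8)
The plan is to unwind the definition of the proximal normal cone to $S(t)$ in terms of the data functions $g,h,G,H$ and the set $U(t)$, and then invoke hypothesis \eqref{bsc-ms} directly. Fix a point $(x,u)\in S_*^{\epsilon,R}(t)$ and a proximal normal $(\alpha,\beta)\in\cN^P_{S(t)}(x,u)$. Since $\cN^P_{S(t)}(x,u)\subseteq\cN^L_{S(t)}(x,u)$, it suffices to bound $\|\alpha\|$ by $k_S(t)\|\beta\|$ for every $(\alpha,\beta)$ in the \emph{limiting} normal cone. Because the local error bound condition for the system representing $S(t)$ holds at $(x,u)$ and $g,h,G,H$ are strictly differentiable, the calculus rule used in the proof of Theorem \ref{thmw} (Proposition \ref{normalcone} together with \cite[Proposition 3.4]{I-O}, applied now with the extra abstract constraint $u\in U(t)$) gives
\[
\cN^L_{S(t)}(x,u)\subseteq\Big\{\big(\nabla_x\Psi(t,x,u;\lambda,\upsilon,\mu,\nu),\ \nabla_u\Psi(t,x,u;\lambda,\upsilon,\mu,\nu)+\zeta\big)\Big\},
\]
where the multipliers $(\lambda,\upsilon,\mu,\nu)$ and $\zeta\in\cN^L_{U(t)}(u)$ range over exactly the index/sign restrictions listed on the left-hand side of the implication \eqref{bsc-ms}: $\lambda\ge0$ with $\lambda_i=0$ off $I^0_t$, $\mu_i=0$ on $\cI^{+0}_t$, $\nu_i=0$ on $\cI^{0+}_t$, and $\mu_i>0,\nu_i>0$ or $\mu_i\nu_i=0$ on $\cI^{00}_t$.

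Given this inclusion, every $(\alpha,\beta)\in\cN^P_{S(t)}(x,u)$ can be written as $\alpha=\nabla_x\Psi(t,x,u;\lambda,\upsilon,\mu,\nu)$ and $\beta=\nabla_u\Psi(t,x,u;\lambda,\upsilon,\mu,\nu)+\zeta$ for some admissible $(\lambda,\upsilon,\mu,\nu,\zeta)$. The hypothesis \eqref{bsc-ms} then yields immediately $\|\alpha\|=\|\nabla_x\Psi\|\le k_S(t)\|\nabla_u\Psi+\zeta\|=k_S(t)\|\beta\|$, which is precisely the bounded slope condition \eqref{bsc}. One should remark that \eqref{bsc-ms} is assumed at \emph{every} $(x,u)\in S_*^{\epsilon,R}(t)$ (not merely at $(x^*(t),u^*(t))$), and likewise the local error bound is assumed at every such point; this matches the quantifier in \eqref{bsc}, so no extra uniformity argument is needed.

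The main technical point, and the only place where care is required, is verifying that the normal-cone estimate for $S(t)$ with the additional abstract constraint $u\in U(t)$ produces exactly the multiplier structure appearing in \eqref{bsc-ms}. This is a routine application of the limiting-normal-cone calculus: one writes $S(t)=\widetilde S(t)\cap(\Re^n\times U(t))$ with $\widetilde S(t)=\{(x,u):g\le0,h=0,(G,H)\in\cC^l\}$, uses the local error bound (equivalently, calmness of the perturbed system) to get the sum rule $\cN^L_{S(t)}(x,u)\subseteq\cN^L_{\widetilde S(t)}(x,u)+\{0\}\times\cN^L_{U(t)}(u)$, and applies Proposition \ref{normalcone} to identify $\cN^L_{\widetilde S(t)}(x,u)$ in terms of $\nabla G$, $\nabla H$ and the multiplier restrictions on $\cI^{+0}_t,\cI^{0+}_t,\cI^{00}_t$, together with the standard KKT-type description of the $g\le0,h=0$ part giving $\lambda\ge0$, $\lambda_i=0$ off $I^0_t$. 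Collecting terms into $\nabla_{x,u}\Psi$ gives the stated inclusion, after which the proof closes in one line.
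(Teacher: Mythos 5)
Your proposal is correct and follows essentially the same route as the paper: pass from proximal to limiting normals, use the local error bound together with the Ioffe--Outrata calmness-based calculus and Proposition \ref{normalcone} to represent every $(\alpha,\beta)\in\cN^L_{S(t)}(x,u)$ as $\big(\nabla_x\Psi,\ \nabla_u\Psi+\zeta\big)$ with the stated multiplier signs and $\zeta\in\cN^L_{U(t)}(u)$, and then apply \eqref{bsc-ms}. The only cosmetic difference is that you phrase the $U(t)$ part as an intersection-plus-sum-rule step, whereas the paper absorbs $\Re^n\times U(t)$ as the abstract set $\cD$ in the error-bound framework and obtains the same estimate in one application of \cite[Proposition 3.4]{I-O}.
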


\begin{proof}
Let $(x,u)\in S_*^{\epsilon,R}(t)$ and $(\alpha,\beta)\in {\cal N}^L_{S(t)}(x,u)$. Since the local error bound condition holds at $(x,u)$, it then follows from \cite[Proposition 3.4]{I-O} that
\begin{eqnarray*}
(\alpha,\beta)\in\left\{\nabla_{x,u} \Psi(t,x,u;\lambda,\upsilon,\mu,\nu):
\begin{array}{l}
\lambda\in \cN^L_{\Re_-^{l_1}}(g(t,x,u)), \upsilon \in \Re^{l_2}\\
(\mu,\nu)\in \cN^L_{\cC^l}(G(t,x,u),H(t,x,u))
\end{array}
\right\}
 +\{0\}\times {\cal N}^L_{U(t)}(u).
\end{eqnarray*}
Then, by Proposition \ref{normalcone},  there exist $\lambda,\upsilon,\mu,\nu$ such that
\begin{eqnarray*}
\begin{array}{lr}
(\alpha,\beta)\in \nabla_{x,u} \Psi(t,x,u;\lambda,\upsilon,\mu,\nu)+\{0\}\times {\cal N}^L_{U(t)}(u),\ \lambda\geq0,\ \lambda_i=0\ \forall i\in I_t^-(x,u),\\[4pt]
\mu_i=0\ \forall i\in \cI^{+0}_t(x,u),\ \nu_i=0\ \forall i\in \cI^{0+}_t(x,u), \ \mu_i>0, \nu_i> 0\ {\rm or}\  \mu_i\nu_i=0\ \forall i\in \cI^{00}_t(x,u).
\end{array}
\end{eqnarray*}
It then follows that there exists $\zeta\in \cN^L_{U(t)}(u)$ such that
\[
\alpha=\nabla_x \Psi(t,x,u;\lambda,\upsilon,\mu,\nu),\quad \beta=\nabla_u\Psi(t,x,u;\lambda,\upsilon,\mu,\nu)+\zeta.
\]
Thus, by condition (\ref{bsc-ms}), we have $\|\alpha\|\leq k_S(t)\|\beta\|$. The proof is complete.
\end{proof}

%

A sufficient condition for condition (\ref{bsc-ms}) to hold is the following stronger condition that is similar to the $M_*^{\epsilon,R}$ condition given in \cite{C-P}: There exists a measurable function $\kappa(\cdot)$ such that for almost every $t\in[t_0,t_1]$,
\begin{eqnarray}\label{bsc2}
&&\left\{\begin{array}{l}
(x,u)\in S_*^{\epsilon,R}(t),\ \zeta\in \cN^L_{U(t)}(u),\\[2pt]
\lambda\geq0,\ \lambda_i=0\ \forall i\in I_t^-(x,u),\\[2pt]
\mu_i=0\ \forall i\in \cI^{+0}_t(x,u),\ \nu_i=0\ \forall i\in \cI^{0+}_t(x,u),\\[2pt]
\mu_i>0, \nu_i> 0\ {\rm or}\  \mu_i\nu_i=0\ \forall i\in \cI^{00}_t(x,u),
\end{array}\right.\nonumber\\[4pt]
&& \Longrightarrow \|(\lambda,\upsilon,\mu,\nu)\|\leq \kappa(t)\|\nabla_u\Psi(t,x,u;\lambda,\upsilon,\mu,\nu)+\zeta\|.
\end{eqnarray}

\begin{assu}\label{ass2}
There exist measurable functions $k_x^g(\cdot),k_x^h(\cdot),k_x^G(\cdot),k_x^H(\cdot)$ such that for almost every $t\in[t_0,t_1]$,
\begin{eqnarray*}
&&\|g(t,x_1,u)-g(t,x_2,u)\|\leq k_x^g(t)\|x_1-x_2\|\quad \forall (x_1,u),(x_2,u)\in S_*^{\epsilon,R}(t),\\
&&\|h(t,x_1,u)-h(t,x_2,u)\|\leq k_x^h(t)\|x_1-x_2\|\quad \forall (x_1,u),(x_2,u)\in S_*^{\epsilon,R}(t),\\
&&\|G(t,x_1,u)-G(t,x_2,u)\|\leq k_x^G(t)\|x_1-x_2\|\quad \forall (x_1,u),(x_2,u)\in S_*^{\epsilon,R}(t),\\
&&\|H(t,x_1,u)-H(t,x_2,u)\|\leq k_x^H(t)\|x_1-x_2\|\quad \forall (x_1,u),(x_2,u)\in S_*^{\epsilon,R}(t).
\end{eqnarray*}
\end{assu}



\begin{prop} \label{prop-strongcon}
Let Assumption \ref{ass2} and condition (\ref{bsc2}) hold.  Then the local error bound condition for the system representing $S(t)$ as in (\ref{cpc1}) holds at every $(x,u)\in S_*^{\epsilon,R}(t)$ and the bounded slope condition (\ref{bsc}) holds with $k_S(t)=\kappa (t)(k_x^g(t)+k_x^h(t)+k_x^G(t)+k_x^H(t))$.
\end{prop}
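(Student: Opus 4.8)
The plan is to derive both conclusions of Proposition~\ref{prop-strongcon} from condition~\eqref{bsc2} and Assumption~\ref{ass2}, using the calmness/error bound machinery already in place. First I would establish the local error bound condition. By the equivalence recorded after \eqref{pconstraineds}, the local error bound at a point $(x,u)\in S_*^{\epsilon,R}(t)$ is equivalent to calmness of the perturbed system $\Omega(y^g,y^h,y^G,y^H)$ at $((0,0,0,0),(x,u))$; and by \cite[Theorem~5.2]{guoyezhang13} (cited in Proposition~\ref{error bound}), calmness — hence the error bound — follows from the MPEC quasi-normality of the system representing $S(t)$ at $(x,u)$, where now the abstract set is $\cD=\Re^n\times U(t)$ and the decision variable is $(x,u)$. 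So it suffices to verify this MPEC quasi-normality. Suppose it fails: then there is a nonzero $(\lambda,\upsilon,\mu,\nu)$ with
\[
0\in \nabla_{x,u}\Psi(t,x,u;\lambda,\upsilon,\mu,\nu)+\{0\}\times\cN^L_{U(t)}(u),
\]
together with the sign conditions on $\lambda,\mu,\nu$ that appear verbatim in the hypothesis of \eqref{bsc2}, and an approach sequence $\{(x^k,u^k)\}\subseteq \Re^n\times U(t)$. The inclusion gives $\nabla_x\Psi(t,x,u;\lambda,\upsilon,\mu,\nu)=0$ and $\nabla_u\Psi(t,x,u;\lambda,\upsilon,\mu,\nu)+\zeta=0$ for some $\zeta\in\cN^L_{U(t)}(u)$. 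Feeding this into \eqref{bsc2} yields $\|(\lambda,\upsilon,\mu,\nu)\|\leq \kappa(t)\cdot 0=0$, contradicting nontriviality. Hence the MPEC quasi-normality holds at every such $(x,u)$, and the error bound follows.

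Next I would prove the bounded slope condition with the stated constant. Let $(x,u)\in S_*^{\epsilon,R}(t)$ and $(\alpha,\beta)\in\cN^P_{S(t)}(x,u)\subseteq\cN^L_{S(t)}(x,u)$. Since the error bound now holds at $(x,u)$, exactly as in the proof of Proposition~\ref{prop-bsc} I invoke \cite[Proposition~3.4]{I-O} together with Proposition~\ref{normalcone} to produce multipliers $(\lambda,\upsilon,\mu,\nu)$ and $\zeta\in\cN^L_{U(t)}(u)$ with $\alpha=\nabla_x\Psi(t,x,u;\lambda,\upsilon,\mu,\nu)$ and $\beta=\nabla_u\Psi(t,x,u;\lambda,\upsilon,\mu,\nu)+\zeta$, the multipliers satisfying the sign restrictions listed in \eqref{bsc2}. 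Now Assumption~\ref{ass2} bounds the $x$-derivatives of the constraint functions: since $\Psi$ is linear in the multipliers and $\nabla_x\Psi=\nabla_x g\,\lambda+\nabla_x h\,\upsilon-\nabla_x G\,\mu-\nabla_x H\,\nu$, the Lipschitz-in-$x$ estimates of Assumption~\ref{ass2} give $\|\nabla_x g(t,x,u)\|\leq k_x^g(t)$, and likewise for $h,G,H$ (a strictly differentiable function whose increments are bounded by $k_x^g(t)\|x_1-x_2\|$ on a neighborhood has $x$-Jacobian of norm at most $k_x^g(t)$ there). Therefore
\[
\|\alpha\|=\|\nabla_x\Psi(t,x,u;\lambda,\upsilon,\mu,\nu)\|\leq \big(k_x^g(t)+k_x^h(t)+k_x^G(t)+k_x^H(t)\big)\,\|(\lambda,\upsilon,\mu,\nu)\|.
\]
Combining this with the right-hand side of \eqref{bsc2}, namely $\|(\lambda,\upsilon,\mu,\nu)\|\leq\kappa(t)\|\nabla_u\Psi(t,x,u;\lambda,\upsilon,\mu,\nu)+\zeta\|=\kappa(t)\|\beta\|$, yields $\|\alpha\|\leq \kappa(t)\big(k_x^g(t)+k_x^h(t)+k_x^G(t)+k_x^H(t)\big)\|\beta\|$, which is precisely the bounded slope condition~\eqref{bsc} with $k_S(t)=\kappa(t)(k_x^g(t)+k_x^h(t)+k_x^G(t)+k_x^H(t))$.

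The routine part is the second half, which is essentially a bookkeeping refinement of Proposition~\ref{prop-bsc} plus the elementary passage from the componentwise Lipschitz bounds in Assumption~\ref{ass2} to a norm bound on the partial Jacobians; one must be a little careful that the estimate on $\|\nabla_x\Psi\|$ aggregates the four blocks correctly (using $\|\lambda\|,\|\upsilon\|,\|\mu\|,\|\nu\|\leq\|(\lambda,\upsilon,\mu,\nu)\|$), but there is no real difficulty. The main obstacle is the first half: one must recognize that \eqref{bsc2}, when specialized to the case $\nabla_u\Psi+\zeta=0$, \emph{forces all multipliers to vanish}, which is exactly the negation of the existence of a nonzero MPEC quasi-normality multiplier; once this is seen, Proposition~\ref{error bound} (via \cite[Theorem~5.2]{guoyezhang13}) delivers the error bound at every point of $S_*^{\epsilon,R}(t)$, not just at $(x^*(t),u^*(t))$. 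I should also double-check that the version of MPEC quasi-normality needed here is the one with the abstract set $\cD=\Re^n\times U(t)$ and variable $z=(x,u)$ — Definition~\ref{defi} is stated for exactly that level of generality, so this is legitimate — and that the approach-sequence clause of quasi-normality plays no role in the contradiction (indeed it does not, since the inclusion alone already kills the multipliers).
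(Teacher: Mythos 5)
Your proof is correct and takes essentially the same route as the paper's: the paper likewise observes that \eqref{bsc2} specialized to the case $0\in\nabla_u\Psi+\cN^L_{U(t)}(u)$ forces the multipliers to vanish, giving MPEC quasi-normality and hence the error bound via Proposition \ref{error bound}, and then uses Assumption \ref{ass2} (as in \cite[Proposition 4.2]{C-P}) to pass from \eqref{bsc2} to condition \eqref{bsc-ms} with $k_S(t)=\kappa(t)(k_x^g(t)+k_x^h(t)+k_x^G(t)+k_x^H(t))$, concluding by Proposition \ref{prop-bsc}. The only difference is cosmetic: you inline the normal-cone decomposition argument of Proposition \ref{prop-bsc} instead of citing it.
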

\begin{proof}
Let $(x,u)\in S_*^{\epsilon,R}(t)$. It is not hard to verify that condition (\ref{bsc2}) implies that
\begin{eqnarray*}
&&\left\{\begin{array}{l}
 0\in \nabla_u \Psi(t,x,u;\lambda,\upsilon,\mu,\nu)+\cN^L_{U(t)}(u),\\[2pt]
\lambda\geq0,\ \lambda_i=0\ \forall i\in I_t^-(x,u),\\[2pt]
\mu_i=0\ \forall i\in \cI^{+0}_t(x,u),\ \nu_i=0\ \forall i\in \cI^{0+}_t(x,u),\\[2pt]
\mu_i>0,\ \nu_i> 0\ {\rm or}\  \mu_i\nu_i=0\ \forall i\in \cI^{00}_t(x,u),
\end{array}\right.\nonumber\\[4pt]
&& \Longrightarrow (\lambda,\upsilon,\mu,\nu)=0.
\end{eqnarray*}
This indicates that the MPEC quasi-normality holds at $(x,u)$ and then by Proposition \ref{error bound}, the local error bound condition for the system representing $S(t)$ as in \eqref{cpc1} holds at $(x,u)$. In the same way as in \cite[Proposition 4.2]{C-P}, we can have that condition (\ref{bsc-ms}) holds with $k_S(t):=\kappa (t)(k_x^g(t)+k_x^h(t)+k_x^G(t)+k_x^H(t))$. Consequently, the bounded slope condition follows from Proposition \ref{prop-bsc} immediately. The proof is complete.
\end{proof}

%
%

In general, it is not easy to guarantee the integrability of the measurable function $k_S(\cdot)$ in the bounded slope condition (\ref{bsc}). We next consider a special case where the mappings $g(\cdot),h(\cdot),G(\cdot),H(\cdot)$, $U(\cdot)$ are all autonomous (i.e., independent of $t$). In this case, we will give some sufficient conditions to ensure that the function $k_S(\cdot)$ is a positive constant function which is clearly integrable. We denote $U(t)\equiv U$, $S(t)\equiv S$, ${\mathbb S}(x):=\{u: (x,u)\in S\}$, and
$$
C^{\epsilon,R}_*:={\rm cl}\,\{(t,x,u)\in[t_0,t_1]\times \Re^n\times \Re^m: (x,u)\in S^{\epsilon,R}_*(t)\}.
$$
Note that  $C^{\epsilon,R}_*$ may be unbounded since $u^*(\cdot)$ may be unbounded on $[t_0,t_1]$.

\begin{prop}\label{prop-auto}
Let all the mappings $g(\cdot),h(\cdot),G(\cdot),H(\cdot),U(\cdot)$ be autonomous. Assume that $C^{\epsilon,R}_*$ is compact for some $\epsilon>0$ and $D^*{\mathbb S}(x,u)(0)=\{0\}$ for every $(x,u)$ such that $(t,x,u)\in C^{\epsilon,R}_*$. Then there exists certain positive constant $\pi$ such that for every $t\in[t_0,t_1]$, the bounded slope condition (\ref{bsc}) holds with $k_S(t)=\pi$. A sufficient condition for $D^*{\mathbb S}(x,u)(0)=\{0\}$ to hold is the local error bound condition for the system representing $S(t)$ as in (\ref{cpc1}) at $(x,u)$ and the implication
\begin{eqnarray}\label{wbcq}
&&\left\{\begin{array}{l}
0\in \nabla_u \Psi(t,x,u;\lambda,\upsilon,\mu,\nu)+\cN^L_U(u),\\[2pt]
\lambda\geq0,\ \lambda_i=0\ \forall i\in I_t^-(x,u),\\[2pt]
\mu_i=0\ \forall i\in \cI^{+0}_t(x,u),\ \nu_i=0\ \forall i\in \cI^{0+}_t(x,u),\\[2pt]
\mu_i>0,\ \nu_i> 0\ {\rm or}\  \mu_i\nu_i=0\ \forall  i\in \cI^{00}_t(x,u),
\end{array}\right.\nonumber \\[4pt]
&& \Longrightarrow \nabla_x \Psi(t,x,u;\lambda,\upsilon,\mu,\nu)=0.
\end{eqnarray}
\end{prop}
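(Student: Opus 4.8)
The plan is to reduce the autonomous, compact situation to the non-autonomous bounded slope condition \eqref{bsc} by showing that the relevant slope moduli can be bounded by a single constant $\pi$ independent of $t$. First I would observe that, since $S$ does not depend on $t$, the proximal normal cone $\cN^P_S(x,u)$ appearing in \eqref{bsc} is also $t$-independent; the only role of $t$ is through the membership $(t,x,u)\in C^{\epsilon,R}_*$, which restricts the pairs $(x,u)$ to a compact set. Thus the implication in \eqref{bsc} only needs to be verified for $(x,u)$ lying in the compact ``slice'' set $\widehat{C}:=\{(x,u): (t,x,u)\in C^{\epsilon,R}_*\ \text{for some}\ t\}$, and it suffices to find a uniform constant $\pi$ such that
\[
(x,u)\in\widehat{C},\ (\alpha,\beta)\in\cN^P_S(x,u)\ \Longrightarrow\ \|\alpha\|\le \pi\|\beta\|.
\]
Since $\cN^P_S(x,u)\subseteq\cN^L_S(x,u)$ and, writing the graph of ${\mathbb S}$ as $S$ (up to reordering coordinates), one has $\alpha\in D^*{\mathbb S}(x,u)(-\beta)$ by definition of the coderivative, the hypothesis $D^*{\mathbb S}(x,u)(0)=\{0\}$ says precisely that $\beta=0$ forces $\alpha=0$. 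The main step is then to \emph{uniformize} this over the compact set $\widehat{C}$: by the closedness of the limiting normal cone mapping (equivalently the closedness of the coderivative), a standard compactness/contradiction argument yields a constant $\pi$ valid for all $(x,u)\in\widehat{C}$. Concretely, if no such $\pi$ existed, one could pick sequences $(x^k,u^k)\in\widehat{C}$ and $(\alpha^k,\beta^k)\in\cN^P_S(x^k,u^k)$ with $\|\alpha^k\|=1$ and $\|\beta^k\|\to 0$; passing to a subsequence using compactness of $\widehat{C}$ and the unit sphere, and using closedness of $\cN^L_S(\cdot)$, one obtains $(\bar x,\bar u)\in\widehat{C}$ and $(\bar\alpha,0)\in\cN^L_S(\bar x,\bar u)$ with $\|\bar\alpha\|=1$, i.e.\ $\bar\alpha\in D^*{\mathbb S}(\bar x,\bar u)(0)\setminus\{0\}$, contradicting the hypothesis. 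Setting $k_S(t)\equiv\pi$ (which is integrable on $[t_0,t_1]$) then gives \eqref{bsc} at every $t$.

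For the second half — that the local error bound condition together with \eqref{wbcq} implies $D^*{\mathbb S}(x,u)(0)=\{0\}$ — I would argue as in the proof of Proposition \ref{prop-bsc}. Take $\alpha\in D^*{\mathbb S}(x,u)(0)$, i.e.\ $(\alpha,0)\in\cN^L_S(x,u)$. Using the local error bound condition and the calculus rule \cite[Proposition 3.4]{I-O} together with Proposition \ref{normalcone} exactly as in Proposition \ref{prop-bsc}, there exist multipliers $(\lambda,\upsilon,\mu,\nu)$ satisfying the sign/support restrictions listed in \eqref{wbcq} and some $\zeta\in\cN^L_U(u)$ with
\[
\alpha=\nabla_x\Psi(t,x,u;\lambda,\upsilon,\mu,\nu),\qquad 0=\nabla_u\Psi(t,x,u;\lambda,\upsilon,\mu,\nu)+\zeta.
\]
The second equality says $0\in\nabla_u\Psi(t,x,u;\lambda,\upsilon,\mu,\nu)+\cN^L_U(u)$, so the hypotheses of the implication \eqref{wbcq} are met, and its conclusion $\nabla_x\Psi(t,x,u;\lambda,\upsilon,\mu,\nu)=0$ gives $\alpha=0$. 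Hence $D^*{\mathbb S}(x,u)(0)=\{0\}$.

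I expect the uniformization step — extracting the single constant $\pi$ from a pointwise, potentially unbounded-in-magnitude condition — to be the main obstacle, and the compactness of $C^{\epsilon,R}_*$ (hence of the slice $\widehat{C}$) is exactly what makes it work; without it one could not rule out the multipliers (and hence the normal vectors) blowing up. A minor technical point to be careful about is that $C^{\epsilon,R}_*$ is defined as a closure, so one should check that the limit point $(\bar x,\bar u)$ still lies in a set over which $S$ and its normal cones behave as assumed; since $S$ is closed and $t$-independent this is immediate, and the membership $(t,x,u)\in C^{\epsilon,R}_*$ is preserved under the limit by closedness of $C^{\epsilon,R}_*$. The remaining verifications (measurability of the constant function, integrability) are trivial.
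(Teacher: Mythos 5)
Your proposal is correct and follows essentially the same route as the paper: a compactness/contradiction argument using the closedness of the limiting normal cone mapping to extract the uniform constant $\pi$, and then the same multiplier representation of $\cN^L_S(x,u)$ (via the local error bound, \cite[Proposition 3.4]{I-O} and Proposition \ref{normalcone}, exactly as in Proposition \ref{prop-bsc}) combined with \eqref{wbcq} to show $D^*{\mathbb S}(x,u)(0)=\{0\}$. No substantive differences from the paper's own proof.
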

\begin{proof}
We prove the first part of this result by contradiction. Assume to the contrary that for every $k$, there exist  $t_k\in[t_0,t_1], (x^k,u^k)\in S^{\epsilon,R}_*(t_k)$, and $(\alpha^k,\beta^k)\in {\cal N}^L_{S}(x^k,u^k)$ such that $\|\alpha^k\|> k\|\beta^k\|.$ Without loss of generality, we assume that $\|\alpha^k\|=1$ and $\alpha^k\to \alpha$ with $\|\alpha\|=1$. Since $\|\alpha^k\|> k\|\beta^k\|\ \forall k$, it follows that $\beta^k\to 0$. Since $C^{\epsilon,R}_*$ is compact, we may assume that $(t_k,x^k,u^k)\to (t,x,u)\in C^{\epsilon,R}_*$. Since the limiting normal cone mapping ${\cal N}^L_{S}(\cdot)$ is closed, we can have $(\alpha,0)\in {\cal N}^L_{S}(x,u)$ that means $\alpha\in D^*{\mathbb S}(x,u)(0)$ by the definition of coderivative.  The assumption $D^*{\mathbb S}(x,u)(0)=\{0\}$ gives a contradiction with the relation $\|\alpha\|=1$. The proof for the first part of this theorem is complete.

Next we show the second part of this theorem.  For any $\alpha\in D^*{\mathbb S}(x,u)(0)$, by the definition of coderivative, we have $(\alpha,0)\in \cN_S^L(x,u)$. Then, as in the proof of Proposition \ref{prop-bsc}, there exist $\lambda,\upsilon,\mu,\nu$ such that
\begin{eqnarray*}
\begin{array}{lr}
(\alpha,0)\in \nabla_{x,u} \Psi(t,x,u;\lambda,\upsilon,\mu,\nu)+\{0\}\times {\cal N}^L_{U}(u),\
\lambda\geq0,\ \lambda_i=0\ \forall i\in I_t^-(x,u),\\[4pt]
\mu_i=0\ \forall i\in \cI^{+0}_t(x,u),\ \nu_i=0\ \forall i\in \cI^{0+}_t(x,u), \ \mu_i>0, \nu_i> 0\ {\rm or}\  \mu_i\nu_i=0\ \forall i\in \cI^{00}_t(x,u).
\end{array}
\end{eqnarray*}
It then follows that
\[
\alpha=\nabla_x \Psi(t,x,u;\lambda,\upsilon,\mu,\nu),\quad 0\in\nabla_u\Psi(t,x,u;\lambda,\upsilon,\mu,\nu) +\cN^L_{U}(u),
\]
which together with condition (\ref{wbcq}) implies that $\alpha=0$. The proof for the second part of the theorem is complete.
\end{proof}

%

The following condition that is stronger than condition \eqref{wbcq} is similar to the so-called MFC proposed in \cite{C-P}:
\begin{eqnarray}\label{wbcqnew}
&&\left\{\begin{array}{l}
0\in \nabla_u \Psi(t,x,u;\lambda,\upsilon,\mu,\nu)+\cN^L_U(u),\\[2pt]
\lambda\geq0,\ \lambda_i=0\ \forall i\in I_t^-(x,u),\nonumber\\[2pt]
\mu_i=0\ \forall i\in \cI^{+0}_t(x,u),\ \nu_i=0\ \forall i\in \cI^{0+}_t(x,u),\\[2pt]
 \mu_i>0, \nu_i> 0\ {\rm or}\  \mu_i\nu_i=0\ \forall i\in \cI^{00}_t(x,u),
\end{array}\right.\\[4pt]
&& \Longrightarrow (\lambda,\upsilon,\mu,\nu)=0,
\end{eqnarray}
which clearly implies the MPEC quasi-normality defined in Theorem \eqref{thmm}(c) and hence the local error bound condition for the system representing $S(t)$ as in (\ref{cpc1}) holds at $(x,u)$ by Proposition \ref{error bound}. Thus, by Proposition \ref{prop-auto}, we can have the following result  immediately.

\begin{cor}\label{cor-auto}
Let all the mappings $g(\cdot),h(\cdot),G(\cdot),H(\cdot),U(\cdot)$ be autonomous. Assume that $C^{\epsilon,R}_*$ is compact for some $\epsilon>0$ and condition \eqref{wbcqnew} holds for every $(x,u)$ such that $(t,x,u)\in C^{\epsilon,R}_*$.  Then there exists certain positive constant $\pi$ such that for every $t\in[t_0,t_1]$, the bounded slope condition (\ref{bsc}) holds with  $k_S(t)=\pi$.
\end{cor}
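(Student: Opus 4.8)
The plan is to reduce Corollary~\ref{cor-auto} directly to Proposition~\ref{prop-auto}, so the only real work is to verify that condition \eqref{wbcqnew} supplies everything that Proposition~\ref{prop-auto} demands for each relevant point $(x,u)$. First I would fix any $(x,u)$ with $(t,x,u)\in C^{\epsilon,R}_*$ for some $t$. Condition \eqref{wbcqnew} is, by construction, strictly stronger than condition \eqref{wbcq}: if the hypotheses of \eqref{wbcq} hold at $(x,u)$, then in particular the first four lines of the implication in \eqref{wbcqnew} hold, so \eqref{wbcqnew} forces $(\lambda,\upsilon,\mu,\nu)=0$, whence $\nabla_x\Psi(t,x,u;\lambda,\upsilon,\mu,\nu)=0$ trivially; thus \eqref{wbcq} holds at $(x,u)$.

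Next I would observe that \eqref{wbcqnew} is exactly the statement that no nonzero multiplier $(\lambda,\upsilon,\mu,\nu)$ satisfies the MPEC-quasi-normality list in Theorem~\ref{thmm}(c) — indeed \eqref{wbcqnew} discards even the sequential condition in that definition, so a fortiori no nonzero multiplier obeying \emph{all three} bullets of the MPEC quasi-normality exists. Hence the MPEC quasi-normality holds at $(x,u)$, and by Proposition~\ref{error bound} the local error bound condition for the system representing $S(t)$ as in \eqref{cpc1} holds at $(x,u)$. So both hypotheses appearing in the ``sufficient condition'' clause of Proposition~\ref{prop-auto} — the local error bound condition and implication \eqref{wbcq} — are verified at every $(x,u)$ with $(t,x,u)\in C^{\epsilon,R}_*$, which yields $D^*{\mathbb S}(x,u)(0)=\{0\}$ there.

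Finally, with $C^{\epsilon,R}_*$ compact for the given $\epsilon>0$ (assumed) and $D^*{\mathbb S}(x,u)(0)=\{0\}$ established on the relevant set, Proposition~\ref{prop-auto} applies verbatim and produces a positive constant $\pi$ such that for every $t\in[t_0,t_1]$ the bounded slope condition \eqref{bsc} holds with $k_S(t)=\pi$. That is exactly the assertion of the corollary, so the proof is complete. I do not anticipate a genuine obstacle here: the corollary is a packaging result, and the only thing to be careful about is matching indices and hypotheses line-by-line between \eqref{wbcqnew}, \eqref{wbcq}, the MPEC quasi-normality in Theorem~\ref{thmm}(c), and Proposition~\ref{prop-auto} — in particular making sure the sign pattern $\mu_i>0,\nu_i>0$ or $\mu_i\nu_i=0$ on $\cI^{00}_t(x,u)$ is transcribed identically throughout, which it is.
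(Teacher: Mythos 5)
Your proposal is correct and follows essentially the same route as the paper: the paper's own justification (in the paragraph preceding the corollary) is exactly that \eqref{wbcqnew} implies both the MPEC quasi-normality (hence the local error bound condition via Proposition \ref{error bound}) and condition \eqref{wbcq}, so that Proposition \ref{prop-auto} applies under the compactness of $C^{\epsilon,R}_*$. Your line-by-line verification of these implications matches the paper's argument.
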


In Proposition \ref{prop-auto} and Corollary \ref{cor-auto}, conditions \eqref{wbcq} and \eqref{wbcqnew} are both required to hold over some neighborhood of the optimal process $(x^*(\cdot),u^*(\cdot))$. In order to weaken this requirement, Clarke and De Pinho \cite[Definition 4.7]{C-P}  introduced the following concept.

\begin{defi}\label{defi admiss}
We say that $(t, x^*(t),u)$ is an admissible cluster point of $(x^*(\cdot),u^*(\cdot))$ if there exist a sequence $\{t^k\}\subseteq [t_0,t_1]$ converging to $t$ and corresponding points $(x^k,u^k)\in S(t^k)$ such that $\lim\limits_{k\to\infty}x^k\to x^*(t)$ and $\lim\limits_{k\to\infty}u^k=\lim\limits_{k\to\infty}u^*(t^k)=u$.
\end{defi}

Based on Definition \ref{defi admiss}, we have the following sufficient condition for the bounded slope condition to hold with certain positive constant.

\begin{prop}\label{bsc-auto}
Let $R(\cdot)\equiv r>0$ be a positive constant function and  all the mappings $g(\cdot)$, $h(\cdot)$, $G(\cdot)$, $H(\cdot)$, $U(\cdot)$ autonomous. Assume that for all admissible cluster points $(t,x^*(t),u)$ of $(x^*(\cdot),u^*(\cdot))$, condition (\ref{wbcq}) and the local error bound condition for the system representing $S(t)$ as in (\ref{cpc1}) hold at $(x^*(t),u)$ or the stronger condition \eqref{wbcqnew} holds at $(x^*(t),u)$.  Then for every $t\in[t_0,t_1]$, the bounded slope condition (\ref{bsc}) holds with some radius $\eta\in (0,r)$ and $k_S(t)= \pi$ for some constant $\pi>0$.
\end{prop}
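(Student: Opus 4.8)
\noindent The plan is to argue by contradiction, imitating the proof of Proposition \ref{prop-auto} but replacing the compactness of $C^{\epsilon,R}_*$ by a subsequence extraction that terminates at an admissible cluster point in the sense of Definition \ref{defi admiss}. Suppose the conclusion fails. Since the assertion is existential in the radius and in the slope constant, its negation yields, for every integer $k>1/r$ (shrinking also the $x$-tube radius to $1/k$, which is legitimate since Assumption \ref{ass1} only requires the existence of \emph{some} positive $\epsilon$), a time $t_k\in[t_0,t_1]$, a pair $(x^k,u^k)\in S$ with $\|x^k-x^*(t_k)\|\le 1/k$ and $\|u^k-u^*(t_k)\|\le 1/k$, and a proximal normal $(\alpha^k,\beta^k)\in\mathcal{N}^P_{S}(x^k,u^k)$ with $\|\alpha^k\|>k\|\beta^k\|$. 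Normalizing, take $\|\alpha^k\|=1$, so that $\|\beta^k\|<1/k\to 0$.

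The next step is to pass to the limit. As $[t_0,t_1]$ is compact, $t_k\to t$ along a subsequence; since $x^*(\cdot)$ is an arc and hence continuous, $x^*(t_k)\to x^*(t)$, and then $\|x^k-x^*(t_k)\|\le 1/k\to0$ forces $x^k\to x^*(t)$. Because $\|u^k-u^*(t_k)\|\le 1/k\to0$, the sequences $\{u^k\}$ and $\{u^*(t_k)\}$ have the same asymptotic behaviour; extracting a further subsequence along which $u^k\to u$ for some $u\in\Re^m$, we also obtain $u^*(t_k)\to u$, so that, by Definition \ref{defi admiss}, $(t,x^*(t),u)$ is an admissible cluster point of $(x^*(\cdot),u^*(\cdot))$, witnessed by $\{t_k\}$ and $\{(x^k,u^k)\}\subseteq S$. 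After one more subsequence, $\alpha^k\to\alpha$ with $\|\alpha\|=1$.

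It remains to close the argument as in Propositions \ref{prop-bsc} and \ref{prop-auto}. The set $S$ is autonomous and closed, $\mathcal{N}^P_{S}(\cdot)\subseteq\mathcal{N}^L_{S}(\cdot)$, and $\mathcal{N}^L_{S}(\cdot)$ has closed graph; hence from $(x^k,u^k)\to(x^*(t),u)$ in $S$ and $(\alpha^k,\beta^k)\to(\alpha,0)$ we get $(\alpha,0)\in\mathcal{N}^L_{S}(x^*(t),u)$, i.e. $\alpha\in D^*{\mathbb S}(x^*(t),u)(0)$. On the other hand, the hypothesis provides, at the admissible cluster point $(t,x^*(t),u)$, either condition \eqref{wbcq} together with the local error bound condition at $(x^*(t),u)$, or the stronger condition \eqref{wbcqnew} at $(x^*(t),u)$; in the latter case \eqref{wbcqnew} implies the MPEC quasi-normality and hence, by Proposition \ref{error bound}, the local error bound condition at $(x^*(t),u)$, while \eqref{wbcqnew} trivially implies \eqref{wbcq}, so one is reduced to the former case. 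The second part of Proposition \ref{prop-auto} then gives $D^*{\mathbb S}(x^*(t),u)(0)=\{0\}$, contradicting $\|\alpha\|=1$. Thus the negation is untenable, and there exist a radius $\eta\in(0,r)$ and a constant $\pi>0$ such that the bounded slope condition \eqref{bsc} holds at every $t\in[t_0,t_1]$ with $k_S(t)\equiv\pi$; this is compatible with Assumption \ref{ass1}(iii), since $k_S\equiv\pi$ is integrable and $R(t)\equiv r\ge(r/\pi)k_S(t)$.

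The crux is the middle paragraph: obtaining a convergent subsequence of $\{(x^k,u^k)\}$ \emph{without} the compactness of $C^{\epsilon,R}_*$. The $x$-coordinate is pinned to $x^*(t)$ for free by continuity of the arc $x^*(\cdot)$; the genuine difficulty lies in the control coordinate, which is exactly what the passage to admissible cluster points is meant to absorb. The point one must handle with care is to guarantee that the limiting control $u$ is a finite vector — ruling out, or otherwise accommodating, a subsequence along which $\|u^*(t_k)\|\to\infty$ — so that $(t,x^*(t),u)$ really is an admissible cluster point at which the hypothesis applies; here the autonomy of $g,h,G,H,U$ and the constancy of $R(\cdot)$ are what make the reduction go through.
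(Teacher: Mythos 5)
Your argument is correct and is essentially the paper's own proof: the paper simply says ``mimicking the proof of Proposition \ref{prop-auto}'', i.e.\ the same contradiction argument in which the compactness of $C^{\epsilon,R}_*$ is replaced by extracting an admissible cluster point from the violating sequence, using continuity of the arc $x^*(\cdot)$, closedness of the graph of ${\cal N}^L_{S}(\cdot)$, and the criterion $D^*{\mathbb S}(x^*(t),u)(0)=\{0\}$ supplied by the second part of Proposition \ref{prop-auto} under either set of hypotheses. The boundedness caveat you flag for $\{u^*(t_k)\}$ (needed to extract a finite limit $u$) is left equally implicit in the paper's one-line proof, so it is not a point of divergence between your argument and theirs.
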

\begin{proof}
Mimicking the proof of Proposition \ref{prop-auto}, we can show that there exist $\epsilon_1\in(0,\epsilon)$, $\eta\in (0,r)$, and $\pi>0$ such that for every $t\in [t_0,t_1]$, the following bounded slope condition holds:
\[
(x,u)\in S_*^{\epsilon_1,\eta}(t),\ (\alpha,\beta)\in {\cal N}^P_{S}(x,u)\Longrightarrow \|\alpha\|\leq \pi\|\beta\|.
\]
The proof is complete.
\end{proof}

Although all the mappings $g(\cdot),h(\cdot),G(\cdot),H(\cdot), U(\cdot)$ are assumed to be autonomous in Propositions \ref{prop-auto}--\ref{bsc-auto} and Corollary \ref{cor-auto}, their results can be applied to the non-autonomous case if $U(t)\equiv U$ is autonomous and we treat the time variable $t$ as a state variable. We now illustrate how this can be done. Since
\begin{eqnarray}\label{sigma}
\sigma(t)=t\quad \forall  t\in[t_0,t_1] \Longleftrightarrow \dot{\sigma}(t)=1\ \forall t\in [t_0,t_1],\ \sigma(t_0)=t_0,
\end{eqnarray}
it is clear that the OCPEC is equivalent to the following optimal control problem:
\begin{eqnarray*}\label{auto}
\min  &&  J(x(\cdot),u(\cdot)):=\int_{t_0}^{t_1} F(\sigma(t), x(t), u(t)) dt + f(x(t_0),x(t_1)),\nonumber \\
{\rm s.t.} && \dot{x}(t)=\phi(\sigma(t),x(t), u(t)),\ \dot{\sigma}(t)=1 \,\quad {\rm a.e.} \,t\in [t_0,t_1],\nonumber\\
&& g(\sigma(t), x(t), u(t) ) \leq 0,\ h(\sigma(t), x(t), u(t) )=0 \quad {\rm a.e.} \,t\in [t_0,t_1],\\
&& 0\leq G(\sigma(t),x(t), u(t))\perp H(\sigma(t),x(t), u(t))\geq0\quad {\rm a.e.} \,t\in [t_0,t_1],
\\
&& u(t) \in U \quad {\rm a.e.} \,t\in [t_0,t_1],\nonumber\\
&& \sigma(t_0)=t_0,\quad (x(t_0), x(t_1))\in E.\nonumber
\end{eqnarray*}
It is easy to see that $(\sigma(\cdot),x^*(\cdot),u^*(\cdot))$ is a  local minimizer of radius $R(\cdot)$ for the above problem if $(x^*(\cdot),u^*(\cdot))$ is a local minimizer of radius $R(\cdot)$ for the OCPEC and $\sigma(\cdot)$ is defined in (\ref{sigma}). Thus, the results in Propositions \ref{prop-auto}--\ref{bsc-auto} and Corollary \ref{cor-auto} can be applied to the above problem to get the desired result.

We close this section by noting the equivalence of the S-stationarity for the OCPEC and the classical necessary optimality condition for the OCPEC treated as an optimal control problem with mixed inequality constraints  (\ref{cpcp}). The proof for the following result is similar to \cite[Proposition 4.1]{Fletcher06} and we omit the proof here.

\begin{prop}
$(x^*(\cdot),u^*(\cdot))$ is an FJ type stationary solution of the OCPEC treated as an optimal control problem with mixed inequality constraints  (\ref{cpcp}) if and only if $(x^*(\cdot),u^*(\cdot))$ is an FJ type S-stationary solution of the OCPEC for which those two sets of multipliers can be chosen as the same.
\end{prop}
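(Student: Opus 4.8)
The plan is to reduce the assertion to the pointwise (in $t$) equivalence between multipliers for the aggregated reformulation $\{G\geq0,\,H\geq0,\,G^\top H\leq0\}$ and multipliers for $0\leq G\perp H\geq0$ in the S-stationary sense, and then to observe that this equivalence lifts through the ``arc part'' of the optimality system -- the nontriviality, transversality, Euler adjoint, and Weierstrass conditions -- which is literally the same in the two formulations; for the Weierstrass condition in particular, note that at $x=x^*(t)$ both formulations define the same feasible set for $u$ because $0\leq a\perp b\geq0$ is equivalent to $a\geq0$, $b\geq0$, $a^\top b\leq0$. Concretely, being an FJ type stationary solution of the OCPEC written with the mixed inequality constraints \eqref{cpcp} will mean that there are $\lambda_0\in\{0,1\}$, an arc $p(\cdot)$, nonnegative measurable functions $\lambda^g(\cdot),\mu^G(\cdot),\mu^H(\cdot)$, a measurable function $\lambda^h(\cdot)$ and a nonnegative measurable \emph{scalar} function $\rho(\cdot)$ (the multiplier of the single scalar constraint $G^\top H\leq0$) satisfying Theorem \ref{thmw}(i)--(ii) and (iv), the complementary slackness relations $\lambda^g_i=0$ on $I^-_t(x^*,u^*)$, $\mu^G_i=0$ on $\cI^{+0}_t(x^*,u^*)$ and $\mu^H_i=0$ on $\cI^{0+}_t(x^*,u^*)$, and the Euler adjoint inclusion whose constraint part is $\nabla_{x,u}\big(g^\top\lambda^g+h^\top\lambda^h-G^\top\mu^G-H^\top\mu^H+\rho\,G^\top H\big)(x^*(t),u^*(t))+\{0\}\times\cN^C_{U(t)}(u^*(t))$, the conditions \eqref{w-l}--\eqref{w-w} being nothing but the $u$-component of this inclusion.

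The key computation is the product rule $\nabla_{x,u}(G^\top H)=\nabla_{x,u}G\,H+\nabla_{x,u}H\,G$ evaluated at $(x^*(t),u^*(t))$. For the direction ``\eqref{cpcp}-stationary $\Rightarrow$ S-stationary'' I would set $\lambda^G_i(t):=\mu^G_i(t)-\rho(t)H_i(t,x^*(t),u^*(t))$ and $\lambda^H_i(t):=\mu^H_i(t)-\rho(t)G_i(t,x^*(t),u^*(t))$; then the above constraint part equals $\nabla_{x,u}\Psi(t,x^*(t),u^*(t);\lambda^g,\lambda^h,\lambda^G,\lambda^H)+\{0\}\times\cN^C_{U(t)}(u^*(t))$, so $(x^*(\cdot),u^*(\cdot))$ is W-stationary with these multipliers, and taking the $\eta$-multipliers equal to the $\lambda$-multipliers the extra S-stationary sign requirements are checked index by index: on $\cI^{+0}_t(x^*,u^*)$ one has $\mu^G_i=0$ and $H_i=0$, so $\lambda^G_i=0$; on $\cI^{0+}_t(x^*,u^*)$ one has $\mu^H_i=0$ and $G_i=0$, so $\lambda^H_i=0$; and on $\cI^{00}_t(x^*,u^*)$ one has $G_i=H_i=0$, so $\lambda^G_i=\mu^G_i\geq0$ and $\lambda^H_i=\mu^H_i\geq0$.

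For the converse, starting from S-stationarity in which the $\eta$-multipliers coincide with the $\lambda$-multipliers, I would reconstruct $\mu^G,\mu^H,\rho$ by first choosing the single scalar
\[
\rho(t):=\max\Big\{\,0,\ \max_{i\in\cI^{+0}_t(x^*,u^*)}\frac{-\lambda^H_i(t)}{G_i(t,x^*(t),u^*(t))},\ \max_{i\in\cI^{0+}_t(x^*,u^*)}\frac{-\lambda^G_i(t)}{H_i(t,x^*(t),u^*(t))}\,\Big\}
\]
and then setting $\mu^G:=\lambda^G+\rho H$ and $\mu^H:=\lambda^H+\rho G$ at $(x^*(t),u^*(t))$. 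Using the W-stationary relations $\lambda^G_i=0$ on $\cI^{+0}_t(x^*,u^*)$ and $\lambda^H_i=0$ on $\cI^{0+}_t(x^*,u^*)$, the S-stationary signs $\lambda^G_i,\lambda^H_i\geq0$ on $\cI^{00}_t(x^*,u^*)$, and the strict positivity of $G_i$ on $\cI^{+0}_t(x^*,u^*)$ and of $H_i$ on $\cI^{0+}_t(x^*,u^*)$, one verifies that $\mu^G,\mu^H\geq0$, that the required complementary slackness relations hold, and -- again by the product rule -- that the Euler adjoint inclusion and its $u$-component for the \eqref{cpcp}-formulation are recovered. Measurability of $\rho(\cdot),\mu^G(\cdot),\mu^H(\cdot)$ follows by splitting $[t_0,t_1]$ into the measurable sets on which the index sets $I^-_t,\cI^{+0}_t,\cI^{00}_t,\cI^{0+}_t$ are constant (using measurability of $G,H$ in $t$) and taking maxima of finitely many measurable functions on each piece.

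I expect the genuinely delicate point to be this converse direction, and specifically the fact that \eqref{cpcp} uses the \emph{aggregated} scalar inequality $G^\top H\leq0$ rather than the $l$ componentwise inequalities $G_iH_i\leq0$: one then has only the single scalar $\rho(t)$ available, and one must ensure it can be taken large enough on every index of $\cI^{+0}_t(x^*,u^*)\cup\cI^{0+}_t(x^*,u^*)$ simultaneously while remaining finite and measurable -- which is exactly where the strict positivity of $G_i$ on $\cI^{+0}_t(x^*,u^*)$ and of $H_i$ on $\cI^{0+}_t(x^*,u^*)$ is used. A secondary, purely technical matter is the usual bookkeeping between limiting and Clarke subdifferentials and normal cones in the $u$-component of the Euler inclusion, which is dealt with as in the passage from Theorem \ref{lema}(c) to Theorem \ref{thmw}(iii). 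The remaining arguments are the dynamic, measurable counterpart of the static equivalence in \cite[Proposition 4.1]{Fletcher06}.
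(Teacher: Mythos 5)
Your argument is essentially the paper's intended one: the paper omits the proof, pointing to the static equivalence in \cite[Proposition 4.1]{Fletcher06}, and your pointwise multiplier transformation $\lambda^G=\mu^G-\rho H$, $\lambda^H=\mu^H-\rho G$ (with the converse choice of a sufficiently large scalar $\rho(t)$ on the biactive-free indices, plus a measurable-selection argument over the finitely many index-set configurations) is exactly that argument carried out almost everywhere in $t$. The sign verifications on $\cI^{+0}_t$, $\cI^{0+}_t$, $\cI^{00}_t$ and the treatment of the aggregated constraint $G^\top H\leq 0$ are correct, so the proposal matches the paper's approach.
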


\section{A simple example}

In this section, we consider a simple class of the OCPEC in which the ``best" control needs to be chosen from the DCP (\ref{dcp1}) so as to make the final state $x(t_1)$ reach some prescribed target ${\cal T}$ from a given initial state $x(t_0)$. Mathematically, the problem considered in this section is
\begin{eqnarray}\label{simple model}
\begin{array}{rl}
\min& \frac{1}{2}\|x(t_1)-{\cal T}\|^2\\[3pt]
& \dot{x}(t)=\phi(t,x(t), u(t))  \quad{\rm a.e.}  \ t\in [t_0,t_1], \\[3pt]
& 0\leq u(t) \perp \Upsilon(t,x(t), u(t)) \geq 0  \quad{\rm a.e.}  \ t\in [t_0,t_1],\\[3pt]
&  x(t_0)\in E_0,\\
\end{array}
\end{eqnarray}
where $E_0\subseteq\Re^n$ is a closed subset. In this case, $S(t):=\{(x,u):(u,\Upsilon(t,x,u))\in\cC^m\}$ and $S_*^{\epsilon,R}(t)$ is defined as in \eqref{neighbor}.
%
For simplicity, we assume that the functions $\phi(\cdot),\Upsilon(\cdot)$ are ${\cal L}$ measurable in variable $t$ and strictly differentiable in $(x,u)$.  Moreover, there exist measurable functions $k_x^\phi(\cdot),k_u^\phi(\cdot),k_x^\Upsilon(\cdot)$ such that for almost every $t\in[t_0,t_1]$,
\begin{equation*}
\begin{array}{l}
\|\Upsilon(t,x^1,u)-\Upsilon(t,x^2,u)\|\leq k_x^\Upsilon(t)\|x^1-x^2\|\quad \forall (x^1,u),(x^2,u)\in S_*^{\epsilon,R}(t),\\[3pt]
\|\phi(t,x^1,u^1)-\phi(t,x^2,u^2)\|\leq k_x^\phi(t)\|x^1-x^2\|+ k_u^\phi(t)\|u^1-u^2\|\quad \forall (x^1,u^1),(x^2,u^2)\in S_*^{\epsilon,R}(t).
\end{array}
\end{equation*}

In the following, we apply the derived results in Section \ref{section3} to problem \eqref{simple model}. The following result follows immediately from Proposition \ref{prop-strongcon} and Theorems \ref{thmw}--\ref{thmS}. Note that since the final point $x(t_1)$ in problem (\ref{simple model}) is free, $\lambda_0$ can be chosen as 1 by Remark \ref{rek}. Moreover, $k_S(t)=\kappa(t)k_x^\Upsilon(t)\ {\rm a.e.}\ t\in[t_0,t_1] $ in this case, and since $(x^*(\cdot),u^*(\cdot))$  is feasible to problem (\ref{simple model}), we have
\[
u^*(t)\in\{u:(u,\Upsilon(t,x^*(t),u))\in\cC^m\}\quad {\rm a.e.}\ t\in[t_0,t_1].
\]

\begin{thm}
Let $(x^*(\cdot),u^*(\cdot))$ be a  local minimizer of radius $R(\cdot)$ for problem (\ref{simple model}). Assume that there exists a measurable function $\kappa(\cdot)$ such that for almost every $t\in[t_0,t_1]$,
\begin{eqnarray*}
\left\{\begin{array}{l}
(x,u)\in S_*^{\epsilon,R}(t),\\[2pt]
 \mu_i=0\ \forall i\in \cI^{+0}_t(x,u),\ \nu_i=0\ \forall i\in \cI^{0+}_t(x,u),\\[2pt]
  \mu_i>0,\ \nu_i> 0\ {\rm or}\  \mu_i\nu_i=0\ \forall i\in \cI^{00}_t(x,u),
\end{array}\right.
\Longrightarrow
\|(\mu,\nu) \|\leq \kappa(t)\|\mu+\nabla_{u} \Upsilon (t,x,u)\nu\|.
\end{eqnarray*}
Assume also that the functions $k_x^\phi(\cdot),\kappa(\cdot) k_x^\Upsilon(\cdot) k_u^\phi(\cdot)$ are integrable and there exists a positive number $\eta>0$ such that $R(t)\geq\eta k_S(t)\ a.e.\ t\in[t_0,t_1]$.
Then $(x^*(\cdot),u^*(\cdot))$ is a W-stationary point, i.e., there exist an arc $p(\cdot)$ and measurable functions $\lambda^G(\cdot)$, $\lambda^H(\cdot)$ such that for almost every $t\in[t_0,t_1]$,
\begin{itemize}
\item[\rm 1)] $p(t_0)\in {\cal N}_{E_0}^L(x^*(t_0)), \ -p(t_1)=x^*(t_1)-{\cal T}$,
\item[\rm 2)] $(\dot{p}(t),0)=-\nabla_{x,u}\phi(t,x^*(t),u^*(t))p(t) -(0,\lambda^G(t))-\nabla_{x,u}\Upsilon(t,x^*(t),u^*(t))\lambda^H(t)$,\\[4pt]
      $\lambda^G_i(t)=0\ \forall  i\in \cI^{+0}_t(x^*,u^*),\ \lambda^H_i(t)=0\ \forall  i\in \cI^{0+}_t(x^*,u^*)$,
\item[\rm 3)] $(x^*(t), u)\in S(t), \|u-u^*(t))\|< R(t)\Rightarrow \langle p(t),\phi(t,x^*(t),u) \rangle  \leq \langle p(t), \phi (t,x^*(t), u^*(t)) \rangle.$
\end{itemize}

If, in addition, the MPEC quasi-normality holds at $u^*(t)\in\{u:(u,\Upsilon(t,x^*(t),u))\in\cC^m\}$ for almost every $t\in[t_0,t_1]$, then $(x^*(\cdot),u^*(\cdot))$ is an M-stationary point, i.e., all the above results 1), 2) and 3) hold and there exist
measurable functions $\eta^G(\cdot),\eta^H(\cdot)$ such that
\begin{eqnarray*}
&&\nabla_{u}\phi(t,x^*(t),u^*(t))p(t) +\eta^G(t)+\nabla_{u}\Upsilon(t,x^*(t),u^*(t))\eta^H(t)=0,\nonumber\\
&& \eta^G_i(t)=0\ \forall i\in \cI^{+0}_t(x^*,u^*),\ \eta^H_i(t)=0\ \forall i\in \cI^{0+}_t(x^*,u^*),\\
&& \eta^G_i(t)>0,\eta^H_i(t)>0\ {\rm or}\ \eta^G_i(t)\eta^H_i(t)=0\ \forall i\in \cI^{00}_t(x^*,u^*).
\end{eqnarray*}

If, in addition, the MPEC-LICQ holds at $u^*(t)\in\{u:(u,\Upsilon(t,x^*(t),u))\in\cC^m\}$ for almost every $t\in[t_0,t_1]$, then
$(x^*(\cdot),u^*(\cdot))$ is an S-stationary point, that is,  for almost every $t\in[t_0,t_1]$, all the above results 1), 2) and 3) hold and
\[
\lambda^G_i(t)\geq0,\ \lambda^H_i(t)\geq0 \ \forall i\in \cI^{00}_t(x^*,u^*).
\]
\end{thm}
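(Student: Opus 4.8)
The plan is to read off problem \eqref{simple model} as the special case of the OCPEC with no mixed equality or inequality constraints, $U(t)\equiv\Re^m$, $F\equiv 0$, $f(a,b)=\frac{1}{2}\|b-{\cal T}\|^2$, $E=E_0\times\Re^n$, and complementarity data $G(t,x,u)=u$, $H(t,x,u)=\Upsilon(t,x,u)$, and then to invoke Proposition \ref{prop-strongcon} together with Theorems \ref{thmw}, \ref{thmm} and \ref{thmS}, substituting this data into their hypotheses and conclusions. The whole argument is a verification-and-specialization, as announced.

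First I would check the standing hypotheses of Theorem \ref{thmw}. Assumption \ref{basic} is immediate from the regularity assumed in this section, and Assumption \ref{ass1}(i) with $k_x^F\equiv k_u^F\equiv 0$ is the assumed Lipschitz bound on $\phi$. For Assumption \ref{ass1}(ii) I would appeal to Proposition \ref{prop-strongcon}: Assumption \ref{ass2} holds with $k_x^g\equiv k_x^h\equiv k_x^G\equiv 0$ (there are no $g,h$, and $G(t,x,u)=u$ is independent of $x$) and $k_x^H=k_x^\Upsilon$, while the implication displayed in the statement of the theorem is precisely condition \eqref{bsc2} for this data --- $U(t)=\Re^m$ forces $\cN^L_{U(t)}(u)=\{0\}$, hence $\zeta=0$; the absent $g,h$ kill the multipliers $\lambda,\upsilon$; and $\Psi(t,x,u;\cdot,\cdot,\mu,\nu)=-u^\top\mu-\Upsilon(t,x,u)^\top\nu$ gives $\nabla_u\Psi+\zeta=-\mu-\nabla_u\Upsilon(t,x,u)\nu$, with the roles of the complementarity multipliers played by $(\mu,\nu)$ and the index sets taken for the pair $(u,\Upsilon)$. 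Proposition \ref{prop-strongcon} then yields the local error bound condition at every point of $S_*^{\epsilon,R}(t)$, in particular at $(x^*(t),u^*(t))$, and the bounded slope condition with $k_S(t)=\kappa(t)k_x^\Upsilon(t)$; Assumption \ref{ass1}(iii) is then exactly the assumed integrability of $k_x^\phi$ and $\kappa(\cdot)k_x^\Upsilon(\cdot)k_u^\phi(\cdot)$ together with the radius bound $R(t)\ge\eta k_S(t)$.

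Next I would apply Theorem \ref{thmw} and specialize its conclusions. Since the terminal point is free ($E=E_0\times\Re^n$), Remark \ref{rek} lets me take $\lambda_0=1$. In the transversality condition, $\partial^L f(a,b)=\{(0,b-{\cal T})\}$ and $\cN^L_E(x^*(t_0),x^*(t_1))=\cN^L_{E_0}(x^*(t_0))\times\{0\}$, giving $p(t_0)\in\cN^L_{E_0}(x^*(t_0))$ and $-p(t_1)=x^*(t_1)-{\cal T}$, i.e.\ 1). In the Euler adjoint inclusion, $F\equiv0$ and the strict differentiability of $\phi$ collapse the Clarke subdifferential to the singleton $\{-\nabla_{x,u}\phi(t,x^*(t),u^*(t))p(t)\}$, $\cN^C_{U(t)}(u^*(t))=\{0\}$, and $\nabla_{x,u}\Psi=-(0,\lambda^G(t))-\nabla_{x,u}\Upsilon(t,x^*(t),u^*(t))\lambda^H(t)$ because $G=u$, $H=\Upsilon$ and $g,h$ are absent; this yields 2) together with $\lambda^G_i(t)=0$ on $\cI^{+0}_t(x^*,u^*)$ and $\lambda^H_i(t)=0$ on $\cI^{0+}_t(x^*,u^*)$. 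The Weierstrass condition with $F\equiv0$ is exactly 3). This establishes the W-stationarity part.

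For the M-stationarity refinement I would invoke Theorem \ref{thmm}(c): the local error bound has already been verified and the assumed MPEC quasi-normality at $u^*(t)$ for problem \eqref{mpec} is the extra hypothesis, so Theorem \ref{thmm} supplies measurable $\eta^G,\eta^H$ satisfying \eqref{w-l}--\eqref{w-m}; specializing \eqref{w-l} ($\partial^L_u$ becomes a gradient, $\cN^L_{U(t)}(u^*(t))=\{0\}$, no $g,h$) gives $\nabla_u\phi(t,x^*(t),u^*(t))p(t)+\eta^G(t)+\nabla_u\Upsilon(t,x^*(t),u^*(t))\eta^H(t)=0$, and \eqref{w-w}, \eqref{w-m} give the sign conditions on $\cI^{0+}_t$, $\cI^{+0}_t$, $\cI^{00}_t$. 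For the S-stationarity part I would invoke Theorem \ref{thmS}: MPEC LICQ gives the local error bound via Proposition \ref{error bound}, and by \cite[Theorem 2]{Scholtes-stationarity} applied to the MPEC in $u$ one gets multipliers with $\eta^G_i(t)\ge0$, $\eta^H_i(t)\ge0$ on $\cI^{00}_t(x^*,u^*)$; MPEC LICQ also forces $\eta^G=\lambda^G$ and $\eta^H=\lambda^H$ almost everywhere, so $\lambda^G_i(t)\ge0$, $\lambda^H_i(t)\ge0$ on $\cI^{00}_t(x^*,u^*)$ follows. There is no genuine obstacle here --- the statement is essentially a corollary --- and the only step requiring care is the bookkeeping that the displayed slope-type implication is condition \eqref{bsc2} for this data (in particular that $\nabla_u G=I$ makes the multiplier $\mu$ appear bare and that $\zeta=0$ since $U=\Re^m$), after which everything else is obtained by substituting $G=u$, $H=\Upsilon$, $g=h=0$, $F=0$ into the general theorems.
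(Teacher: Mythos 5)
Your proposal is correct and follows essentially the same route as the paper: the paper likewise obtains this result by specializing Proposition \ref{prop-strongcon} (noting $k_S(t)=\kappa(t)k_x^\Upsilon(t)$, with the displayed implication playing the role of condition \eqref{bsc2}) together with Theorems \ref{thmw}--\ref{thmS}, and invokes Remark \ref{rek} to take $\lambda_0=1$ because the final state is free. Your bookkeeping of the specialization ($G=u$, $H=\Upsilon$, $g,h$ absent, $U=\Re^m$, $F\equiv0$, $E=E_0\times\Re^n$) matches the paper's intended argument.
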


\bigskip

In the rest of this section, we focus on a proper specialization of the DCP (\ref{dcp1}) where
\[
\phi(t,x,u):=Ax+Bu+c,\quad \Upsilon(t,x,u):=Cx+Du+q
\]
for some  matrices $A,B,C,D$ and vectors $c,q$ of appropriate sizes. Note that in this case, the functions $\phi(\cdot),\Upsilon(\cdot)$ are both autonomous. Then problem (\ref{simple model}) reduces to
\begin{eqnarray}\label{simple model2}
\begin{array}{rl}
\min\limits& \frac{1}{2}\|x(t_1)-{\cal T}\|^2\\[3pt]
& \dot{x}(t)= Ax(t)+Bu(t)+c \quad{\rm a.e.}  \ t\in [t_0,t_1], \\[3pt]
& 0\leq u(t) \perp Cx(t)+Du(t)+q \geq 0  \quad{\rm a.e.}  \ t\in [t_0,t_1],\\[3pt]
&  x(t_0)\in E_0.\\
\end{array}
\end{eqnarray}
In this case, let $R(\cdot)\equiv r>0$ be a positive constant function and set
\begin{eqnarray}
&&S(t)\equiv S:=\{(x,u): (u, Cx+Du+q)\in\cC^m\},\label{special6}\\
&&S_*^{\epsilon,r}(t):=\big\{(x,u)\in S: \|x-x^*(t)\|\leq\epsilon, \|u-u^*(t)\|\leq r\big\},\nonumber\\
&&C^{\epsilon,r}_*:={\rm cl}\,\{(t,x,u)\in[t_0,t_1]\times \Re^n\times\Re^m: (x,u)\in S^{\epsilon,r}_*(t)\}.\label{special5}
\end{eqnarray}

The following result follows from Proposition \ref{prop-auto} and Theorems \ref{thmm}--\ref{thmS} immediately. Note that the local error bound condition for the system representing $S(t)$ as in (\ref{special6}) holds since the functions $\phi(t,\cdot,\cdot),\Upsilon(t,\cdot,\cdot)$ are affine in $(x,u)$, and when $u^*(\cdot)$ is bounded over $[t_0,t_1]$, the compactness of $C^{\epsilon,r}_*$ is superfluous. Moreover, since $(x^*(\cdot),u^*(\cdot))$ is feasible to problem (\ref{simple model2}), we have
\[
u^*(t)\in\{u:(u,C x^*(t)+Du+q)\in\cC^m\}\quad {\rm a.e.}\ t\in[t_0,t_1].
\]

\begin{thm}
Let $(x^*(\cdot),u^*(\cdot))$ be a  local minimizer of positive constant radius $R(\cdot)\equiv r>0$ for problem (\ref{simple model2}) and $C^{\epsilon,r}_*$ defined in (\ref{special5}) be compact. Assume that for all $(t,x,u)\in C^{\epsilon,r}_*$,
\begin{eqnarray*}
\left\{\begin{array}{l}
 \mu+ D^T\nu=0,\\[2pt]
 \mu_i=0\ \forall i\in \cI^{+0}_t(x,u),\ \nu_i=0\ \forall i\in \cI^{0+}_t(x,u),\\[2pt]
  \mu_i>0,\ \nu_i> 0\ {\rm or}\  \mu_i\nu_i=0\ \forall i\in \cI^{00}_t(x,u),
\end{array}\right.
\Longrightarrow
C^\top\mu=0.
\end{eqnarray*}
Then $(x^*(\cdot),u^*(\cdot))$ is an M-stationary point, i.e.,  there exist an arc $p(\cdot)$ and measurable functions $\lambda^G(\cdot)$, $\lambda^H(\cdot)$ and $\eta^G(\cdot)$, $\eta^H(\cdot)$ such that for almost every $t\in[t_0,t_1]$,
\begin{itemize}
\item[\rm 1)] $p(t_0)\in {\cal N}_{E_0}^L(x^*(t_0)), \ -p(t_1)=x^*(t_1)-{\cal T}$,
\item[\rm 2)] $-A^\top p(t)-C^\top \lambda^H(t)=\dot{p}(t),\  B^\top p(t)+\lambda^G(t) + D^\top \lambda^H(t)=0$,\\[3pt]
      $\lambda^G_i(t)=0\ \forall i\in \cI^{+0}_t(x^*,u^*),\ \lambda^H_i(t)=0\ \forall i\in \cI^{0+}_t(x^*,u^*)$,
\item[\rm 3)] $ \langle B^\top p(t), u-u^*(t) \rangle \leq0\quad \forall u: (x^*(t), u)\in S, \|u-u^*(t))\|< r$,

\item[\rm 4)] $B^\top p(t) + \eta^G(t) + D^\top \eta^H(t)=0$,\\[3pt]
$\eta^G_i(t)=0\ \forall i\in \cI^{+0}_t(x^*,u^*),\ \eta^H_i(t)=0\ \forall i\in \cI^{0+}_t(x^*,u^*)$,\\[3pt]
$\eta^G_i(t)>0,\eta^H_i(t)>0\ {\rm or}\ \eta^G_i(t)\eta^H_i(t)=0\ \forall i\in \cI^{00}_t(x^*,u^*)$.
\end{itemize}

If, in addition, the MPEC-LICQ holds at $u^*(t)\in\{u:(u,C x^*(t)+Du+q)\in\cC^m\}$ for almost every $t\in[t_0,t_1]$, then
$(x^*(\cdot),u^*(\cdot))$ is an S-stationary point, that is,  for almost every $t\in[t_0,t_1]$, all the above results 1), 2) and 3) hold and
\[
\lambda^G_i(t)\geq0,\ \lambda^H_i(t)\geq0 \ \forall i\in \cI^{00}_t(x^*,u^*).
\]
\end{thm}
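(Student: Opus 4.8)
The plan is to recognise problem~(\ref{simple model2}) as a particular instance of the OCPEC and then invoke, in turn, Proposition~\ref{prop-auto} (to verify the standing assumptions), Theorem~\ref{thmm} (to obtain M-stationarity), and Theorem~\ref{thmS} (to obtain S-stationarity), translating the abstract conclusions into coordinates. In the notation of Section~\ref{section3} one has $F\equiv 0$ (a Mayer problem), $\phi(t,x,u)=Ax+Bu+c$, $f(x_0,x_1)=\frac{1}{2}\|x_1-{\cal T}\|^2$, $E=E_0\times\Re^n$, $U(t)\equiv\Re^m$, no mixed equality/inequality constraints, $G(t,x,u)=u$, and $H(t,x,u)=Cx+Du+q$. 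Since $\phi,f,G,H$ are all affine (hence strictly, indeed continuously, differentiable) and $E_0$ is closed, Assumption~\ref{basic} holds automatically; and since $\phi$ is globally Lipschitz with constant moduli $\|A\|,\|B\|$, $F\equiv0$, and these moduli stay finite on the compact set $C^{\epsilon,r}_*$, Assumption~\ref{ass1}(i) holds. Because $R(\cdot)\equiv r$ is a positive constant, Assumption~\ref{ass1}(iii) will follow once $k_S(\cdot)$ is exhibited as a constant.

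First I would establish the bounded slope condition, Assumption~\ref{ass1}(ii), via Proposition~\ref{prop-auto}: all the data are autonomous, $C^{\epsilon,r}_*$ is compact by hypothesis (and this is automatic when $u^*(\cdot)$ is bounded), and the local error bound condition for the system representing $S$ holds at every point because $G,H$ are affine, so the MPEC linear condition holds and Proposition~\ref{error bound} applies. Writing out $\nabla_{x,u}\Psi(t,x,u;\mu,\nu)$ in terms of $I,C,D$ and using ${\cal N}^L_{\Re^m}(u)=\{0\}$, the implication~(\ref{wbcq}) reduces to the implication assumed in the statement. Hence $D^*{\mathbb S}(x,u)(0)=\{0\}$ for all $(x,u)$ with $(t,x,u)\in C^{\epsilon,r}_*$, so Proposition~\ref{prop-auto} yields the bounded slope condition~(\ref{bsc}) with $k_S(t)\equiv\pi$ for a constant $\pi>0$; then $R(t)=r\ge\eta k_S(t)$ with $\eta=r/\pi$, completing Assumption~\ref{ass1}.

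With the assumptions in force, I would apply Theorem~\ref{thmm}: its condition~(b) holds since $g,h,G,H$ are affine and $U(t)=\Re^m$ is polyhedral, so $(x^*(\cdot),u^*(\cdot))$ is FJ type M-stationary; and because the final point is free ($E=E_0\times\Re^n$), Remark~\ref{rek} forces $\lambda_0=1$, giving genuine M-stationarity. It then remains to specialise the conditions of Theorem~\ref{thmw} and Definition~\ref{defi-w}. Using $\partial^L f(x_0,x_1)=\{0\}\times\{x_1-{\cal T}\}$ and ${\cal N}^L_E(x^*(t_0),x^*(t_1))={\cal N}^L_{E_0}(x^*(t_0))\times\{0\}$, the transversality condition becomes item~1). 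Since $\phi$ is affine, $\partial^C\{\langle-p(t),\phi(t,\cdot,\cdot)\rangle\}(x^*(t),u^*(t))=\{(-A^\top p(t),-B^\top p(t))\}$ and ${\cal N}^C_{\Re^m}(u^*(t))=\{0\}$, so the Euler adjoint inclusion, after substituting the explicit form of $\nabla_{x,u}\Psi$, becomes item~2) together with $\lambda^G_i=0$ on $\cI^{+0}_t(x^*,u^*)$ and $\lambda^H_i=0$ on $\cI^{0+}_t(x^*,u^*)$. The Weierstrass condition, $\phi$ being affine and $F\equiv0$, collapses to $\langle B^\top p(t),u-u^*(t)\rangle\le0$ for admissible $u$, which is item~3). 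And the M-stationarity relations~(\ref{w-l})--(\ref{w-w}) together with~(\ref{w-m}), handled the same way, give item~4).

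For the S-stationarity assertion I would invoke Theorem~\ref{thmS}, whose additional hypotheses ($U(t)=\Re^m$ and $F(t,\cdot,\cdot),\phi(t,\cdot,\cdot)$ strictly differentiable at $(x^*(t),u^*(t))$) hold automatically here; under MPEC~LICQ at $u^*(t)$ for the pointwise problem~(\ref{mpec}) this yields the extra sign condition $\lambda^G_i(t),\lambda^H_i(t)\ge0$ on $\cI^{00}_t(x^*,u^*)$ and, by uniqueness of the W-stationarity multipliers, the coincidence of the two sets of multipliers almost everywhere, so item~4) is absorbed into item~2) with the sign condition. One also records that feasibility of $(x^*(\cdot),u^*(\cdot))$ gives $u^*(t)\in\{u:(u,Cx^*(t)+Du+q)\in\cC^m\}$ a.e. I expect no conceptual obstacle here: granting the earlier theorems, the one thing needing care is bookkeeping, namely keeping the roles of $\mu$ and $\nu$ (equivalently of $\lambda^G$ and $\lambda^H$) and their sign patterns straight when matching~(\ref{wbcq}), the Euler inclusion, and the M-stationarity inclusion against the explicit identities 1)--4).
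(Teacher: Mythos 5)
Your proposal is correct and takes essentially the same route as the paper, which obtains this theorem directly from Proposition \ref{prop-auto} (giving the bounded slope condition with a constant $k_S$, the local error bound being automatic since the data are affine) together with Theorems \ref{thmm}--\ref{thmS} and Remark \ref{rek} (free final point, hence $\lambda_0=1$). One bookkeeping remark: with $G(t,x,u)=u$ and $H(t,x,u)=Cx+Du+q$, the implication \eqref{wbcq} specializes to the conclusion $C^\top\nu=0$ rather than the printed $C^\top\mu=0$, which appears to be a typo in the theorem statement and not a defect in your argument.
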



\bigskip

\noindent\textbf{Acknowledgements.} The authors are grateful to the two
anonymous referees for their extremely helpful comments and suggestions.


\end{document}